\numberwithin{equation}{section}
\newcommand{\bb}[1]{\mathbb{#1}}
\newcommand{\cc}[1]{\mathcal{#1}}
\newcommand{\lie}[1]{\mathfrak{#1}}
\newcommand{\diag}{\textrm{diag}}
\def\inv{^{-1}}
\theoremstyle{plain}
\newtheorem{theorem}{Theorem}[section]
\newtheorem{lemma}[theorem]{Lemma}
\newtheorem{proposition}[theorem]{Proposition}
\theoremstyle{definition}
\newtheorem{example}[theorem]{Example}
\newtheorem{remark}[theorem]{Remark}
\newtheorem{definition}[theorem]{Definition}
\begin{document}
\title[Lower bounds for Gromov width of coadjoint orbits in U(n).]
{Lower bounds for Gromov width of coadjoint orbits in U(n).}

\author{Milena Pabiniak}

\address{Milena Pabiniak,
Department of Mathematics,  
Cornell University, 
Ithaca NY}
\email{milena@math.cornell.edu}

\thanks{\today}

\maketitle

\begin{abstract} We use the Gelfand-Tsetlin pattern to construct an effective Hamiltonian, completely integrable action of a torus $T$
on an open dense subset of a coadjoint orbit of the unitary group. We then identify a proper Hamiltonian $T$-manifold centered around a point in 
the dual of the Lie algebra of $T$.
A theorem of 
Karshon and Tolman says that such a manifold is equivariantly symplectomorphic to a particular subset of $\bb{R}^{2D}$.
This fact enables us to construct symplectic embeddings of balls into certain coadjoint orbits of the unitary group,
and therefore obtain a lower bound for their Gromov width. 
Using the identification of the dual of the Lie algebra of the unitary group with the space of $n \times n$ Hermitian matrices, the main theorem states 
that for a coadjoint orbit through $\lambda=$diag$(\lambda_1, \ldots , \lambda_n)$ in the dual of the Lie algebra of the unitary group, 
where at most one eigenvalue is repeated, the lower bound for Gromov width is equal to the minimum of the differences $\lambda_i-\lambda_j$, over all $\lambda_i>\lambda_j$.
For a generic orbit (i.e. with distinct $\lambda_i$'s), with additional integrality conditions, this minimum has been proved to be exactly the Gromov width of the orbit.
For nongeneric orbits this lower bound is new.
\end{abstract}
\tableofcontents
\section{Introduction} In 1985 Mikhail Gromov proved the nonsqueezing theorem which is one of the foundational results in the modern theory of symplectic invariants.
The theorem says that a ball
$B^{2N}(r)$ of radius $r$, in a symplectic vector space $\bb{R}^{2N}$ with the usual symplectic structure, cannot be symplectically embedded into $B^2(R)\times \bb{R}^{2N-2}$
unless $r\leq R$. This motivated the definition of the invariant called the Gromov width.
Consider the ball of capacity $a$
$$ B^{2N}_a = \Big \{ z \in \bb{C}^N \ \Big | \ \pi \sum_{i=1}^N |z_i|^2 < a \Big \} , $$
with the standard symplectic form
$\omega_{std} = \sum dx_j \wedge dy_j$.
The \textbf{Gromov width} of a $2N$-dimensional symplectic manifold $(M,\omega)$
is the supremum of the set of $a$'s such that $B^{2N}_a$ can be symplectically
embedded in $(M,\omega)$. 
Equivalently, it is $
\sup\{\pi r^2\,|\, B^{2N}(r) \textrm{ can be symplectically embedded into } (M, \omega)\}.$\\
\indent In this paper we consider coadjoint orbits of $U(n)$.
Multiplying by a factor of $i$, we can identify the Lie algebra $\lie{u}(n)$ with the space of Hermitian matrices.
The pairing in $\lie{u}(n)$
$$(A,B)=\textrm{trace}(AB)$$
gives us the identification of $\lie{u}^*(n)$ with $\lie{u}(n)$. From now on, we will identify $\lie{u}^*(n)$ with the space of Hermitian matrices.\\
\indent Given a Hamiltonian torus action one can construct embeddings of balls using the information from the moment polytope.
Using this technique we prove the following theorem.
\begin{theorem}\label{main}
Consider the $U(n)$ coadjoint orbit $M:=\mathcal{O}_{\lambda}$ in $\lie{u}(n)^*$ through a point $diag\,(\lambda_1, \lambda_2, \ldots, \lambda_n)$ 
where $$\lambda_1 > \lambda_2> \ldots > \lambda_l=\lambda_{l+1}= \ldots = \lambda_{l+s}>\lambda_{l+s+1}> \ldots > \lambda_n, \,\,s\geq 0.$$ 
The Gromov width of $M$ is at least the minimum $\min\{\lambda_i-\lambda_j\,|\, \lambda_i>\lambda_j\, \}.$
\end{theorem}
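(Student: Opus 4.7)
The plan is to construct the Gelfand--Tsetlin (GT) integrable system on $M=\cc{O}_\lambda$, locate inside its moment polytope a standard open simplex of the correct size around a distinguished vertex, and invoke the theorem of Karshon and Tolman from the abstract to pull back the standard toric model of a symplectic ball. For a Hermitian matrix $A\in M$, let $\lambda_j^{(k)}(A)$ denote the $j$-th largest eigenvalue of its upper-left $k\times k$ principal submatrix, $1\le j\le k\le n$. The top row is fixed at $\lambda_j^{(n)}=\lambda_j$; classically, on the open dense subset where the relevant eigenvalues are simple, these functions Poisson-commute and generate an effective Hamiltonian action of a torus $T$ of dimension $D=\tfrac12\dim M$, with moment image $\Phi(M)$ equal to the GT polytope cut out by the interlacing inequalities $\lambda_j^{(k+1)}\ge\lambda_j^{(k)}\ge\lambda_{j+1}^{(k+1)}$.

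I would take as center the vertex $c$ of the polytope given by $c_j^{(k)}=\lambda_j$ for all admissible $j,k$, at which every upper interlacing inequality $\lambda_j^{(k+1)}\ge\lambda_j^{(k)}$ is tight. A direct inspection shows that $c$ is a \emph{smooth} vertex in the toric sense: the primitive edge vectors $v_j^{(k)}$ of the polytope at $c$ form a basis of the weight lattice of $T$, and each edge has length equal to some consecutive difference $\lambda_j-\lambda_{j+1}$. Setting $\delta=\min\{\lambda_i-\lambda_j\mid\lambda_i>\lambda_j\}$, a telescoping argument identifies $\delta$ with the minimum consecutive gap, so every edge at $c$ has length at least $\delta$. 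Consequently the open standard simplex
\[
\Sigma_\delta(c)=\Big\{c+\sum_{j,k} t_j^{(k)}\, v_j^{(k)}\;\Big|\;t_j^{(k)}>0,\;\sum_{j,k} t_j^{(k)}<\delta\Big\}
\]
is contained in $\Phi(M)$.

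The preimage $W:=\Phi\inv(\Sigma_\delta(c))$ is $T$-invariant, sits in the smooth locus of $\Phi$, and satisfies the properness and centeredness hypotheses of Karshon--Tolman at $c$. Their theorem then identifies $W$ equivariantly symplectically with a standard open toric subset of $(\bb{R}^{2D},\omega_{std})$ which contains the ball $B^{2D}_\delta$; composing the inverse identification with the inclusion $W\hookrightarrow M$ yields the symplectic embedding $B^{2D}_\delta\hookrightarrow M$, establishing the lower bound.

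The principal obstacle is the nongeneric case $s>0$. Then several $\lambda_j^{(k)}$ are identically pinned to the repeated eigenvalue, the full torus $T^{n(n-1)/2}$ collapses to a smaller effective one of dimension $\tfrac12\dim M$, and the would-be vertex $c$ lies on a high-codimension face of the original polytope. I would need to isolate the independent GT coordinates, verify that the remaining functions still generate a completely integrable action of the reduced effective torus on an open dense subset of $M$, and recompute the primitive edge vectors and edge lengths at the reduced vertex so as to recover the same value $\delta$. Showing that $W$ remains a genuine proper Hamiltonian $T$-manifold with a smooth toric fixed-point structure at $c$, rather than acquiring singularities along the degenerate face, is the technically most delicate step.
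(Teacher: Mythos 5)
Your overall strategy coincides with the paper's: run the Gelfand--Tsetlin system, center a Karshon--Tolman region at a vertex of the GT polytope, and bound the edge lengths there. For generic $\lambda$ this outline is essentially correct (modulo a small point: your region $\Sigma_\delta(c)$ with all $t_j^{(k)}>0$ does not contain $c$, so the hypothesis of the Karshon--Tolman proposition that $\Phi^{-1}(\{\alpha\})$ be the single fixed point fails; you must take the union of the relative interiors of \emph{all} faces of the polytope containing $c$, as in the paper's Example of the largest centered subset). But there is a genuine gap exactly in the case the theorem is about, $s\ge 1$, which you defer rather than resolve, and the plan you sketch for it points in a direction that does not work.

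The problem is your choice of center $c_j^{(k)}=\lambda_j$, i.e.\ the GT image of the point $\diag(\lambda_1,\dots,\lambda_n)$ itself. When the repeated block $\lambda_l=\dots=\lambda_{l+s}$ sits in the middle of the spectrum, this point is \emph{not} in the smooth locus of the GT system. Concretely, at level $k=n-1$ the functions $\lambda^{(n-1)}_l,\dots,\lambda^{(n-1)}_{l+s-1}$ are pinned to the constant $\lambda_l$ by interlacing, while $\lambda^{(n-1)}_{l+s}$ is non-constant on the orbit (it ranges over $[\lambda_{l+s+1},\lambda_{l+s}]$); at your vertex it attains its upper bound and collides with the constant functions. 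This eigenvalue coincidence is not forced on the whole orbit, so $\Lambda(\diag(\lambda))$ lies on a \emph{regular} wall and the sorted-eigenvalue functions need not be smooth there --- the Thimm-trick torus action is not defined in any neighborhood of your center. The paper's resolution is not to ``recompute the edge vectors at the reduced vertex'': it is to replace the center by a \emph{different} $T^n$-fixed point, a permutation of $\diag(\lambda)$ in which the $s$ extra copies of the repeated eigenvalue occupy the bottom-right block. At that fixed point every eigenvalue coincidence of every leading principal submatrix is forced on the whole orbit (this is the paper's Lemma on ``good vertices''), so the point lies in the smooth locus and the centered-region argument applies. One then still has to show that this vertex has exactly $D=\tfrac12\dim M$ edges and that each has lattice length at least $\min\{\lambda_i-\lambda_j\mid\lambda_i>\lambda_j\}$; the paper does this by projecting the GT polytope onto the standard $T^n$ moment polytope and tracking the spheres obtained by conjugating $F$ with rank-two unitaries, showing each edge reaches at least to the next distinct eigenvalue (your assertion that each edge length \emph{equals} a consecutive difference is also an overclaim --- edges corresponding to swapping non-adjacent eigenvalues are longer --- though only the lower bound is needed). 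Without the change of vertex and the accompanying smoothness lemma, the argument does not cover any orbit with $s\ge 1$.
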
 \noindent
In fact we prove a stronger (but more cumbersome to state) result - see Remark \ref{moresmoothness}.
\\ \indent There are reasons to care about this particular lower bound.
In the case of generic coadjoint orbits, i.e. when $\lambda_1 > \lambda_2 \ldots > \lambda_n$, Masrour Zoghi in \cite{Z}
had already obtained this lower bound. Moreover, with some additional integrability assumption on $\lambda$, he 
proved that this lower bound is precisely the Gromov width. He also proved a similar upper bound for Gromov width of generic coadjoint orbits (with some integrality conditions) 
of other simple compact Lie groups.
This suggests that the lower bound for non-generic orbits that we provide here may in fact be the Gromov width.
\\ \indent To prove the Theorem \ref{main} we will recall an action of the Gelfand-Tsetlin torus on an open dense subset of $\mathcal{O}_{\lambda}$. We will then use
the theorem of Karshon and Tolman, \cite{KT}, recalled here as Proposition \ref{embed}, to obtain symplectic embeddings of balls. Masrour Zoghi also used the Karshon and Tolman's result, 
but applied to the standard coadjoint action of a maximal torus. 
He suggested that maybe the action of the Gelfand-Tsetlin torus could give stronger results for a wider class of orbits.
\\ \indent \textbf{Organization}. Section \ref{Preliminaries} provides background about centered actions and Gelfand-Tsetlin functions.
In Section \ref{actionsection}, we carefully analyze Gelfand-Tsetlin functions and the action they induce.
Section \ref{mainproof} contains the proof of the main result. Section \ref{summary} has a ``bookkeeping'' character.
There we summarize what is known about the Gromov width of $U(n)$ coadjoint orbits for small values of $n$.
\\ \indent \textbf{Acknowledgments.} The author is very grateful to Yael Karshon for suggesting this problem and helpful conversations during my work on this project. 
The author also would like to thank her advisor, Tara Holm, for useful discussions.
\section{Preliminaries}\label{Preliminaries}
\subsection{Centered actions}
Centered actions were introduced in \cite{KT}; we include the details here for completeness and to set notation. Let $(M, \omega)$ be a connected symplectic manifold, equipped with an effective, symplectic action of a torus $T\cong (S^1)^{\dim T}.$
The action of $T$ is called \textbf{Hamiltonian} if there exists a $T$-invariant map
$\Phi \colon M \to \lie{t}^*$, called the \textbf{moment map}, such that
\begin{equation} \label{def moment}
        \iota(\xi_M) \omega = - d \left< \Phi,\xi \right>
\quad \forall \ \xi \in \lie{t},
\end{equation}
where $\xi_M$ is the vector field on $M$ generated by $\xi \in \lie{t}$.
We will identify Lie$(S^1)$ with $\bb{R}$ using the convention that the exponential map $exp:\bb{R}\cong$Lie$(S^1)\rightarrow S^1$ is
given by $t\rightarrow e^{2\pi i t},$ that is $S^1\cong \bb{R}/\bb{Z}$.\\
At a fixed point $p\in M^T$, we may consider the induced action of $T$ on the tangent space $T_pM$. There exist $\eta_j \in \lie{t}^*$,
called the \textbf{isotropy weights} at $p$, such that this action is isomorphic to the action on $(\bb{C}^n,\omega_{std})$
generated by the moment map $$ \Phi_{\bb{C}^n}(z) = \Phi(p) + \pi \sum |z_j|^2 \eta_j .$$
The isotropy weights are uniquely determined up to permutation.
By the equivariant Darboux theorem,
a neighborhood of $p$ in $M$ is equivariantly symplectomorphic
to a neighborhood of $0$ in $\bb{C}^n$.
However, this theorem does not tell us how large we may take this neighborhood to be.
Let $\cc{T} \subset \lie{t}^*$ be an open convex set which contains $\Phi(M)$.
The quadruple $(M,\omega,\Phi,\cc{T})$ is a
\textbf{proper Hamiltonian $\mathbf{T}$-manifold} if 
$\Phi$ is proper as a map to $\cc{T}$, that is,
the preimage of every compact subset of $\cc{T}$ is compact.

For any subgroup $K$ of $T$, let
$M^K = \{ m \in M \mid a \cdot m = m \ \forall a \in K \}$
denote its fixed point set.

\begin{definition} \label{centered-definition}
A proper Hamiltonian $T$-manifold $(M,\omega,\Phi,\cc{T})$
is \textbf{centered} about a point $\alpha \in \cc{T}$ if
$\alpha$ is contained in the moment map image of every component
of $M^K$, for every subgroup $K \subseteq T$.
\end{definition}
We now quote several examples and non-examples, following \cite{KT}.
\begin{example}
A compact symplectic manifold with a non-trivial $T$-action is never
centered, because it has fixed points with different moment map images.
\end{example}

\begin{example}
Let a torus $T$ act linearly on $\bb{C}^n$ with a proper moment map
$\Phi_{\bb{C}^n}$ such that $\Phi_{\bb{C}^n}(0) = 0$.  
Let $\cc{T} \subset \lie{t}^*$ be an open convex subset containing the origin. 
Then $\Phi_{\bb{C}^n}^{-1}(\cc{T})$ is centered about the origin.
\end{example}
A Hamiltonian $T$ action on $M$ is called \textbf{toric} if 
$\dim T= \frac 1 2 \dim M.$
\begin{example}\label{largest}
Let $M$ be a compact symplectic toric manifold with moment map
$\Phi \colon M \to \lie{t}^*$.   Then $\Delta := \textrm{Im } \Phi$
is a convex polytope.  The orbit type strata in $M$ are the
moment map pre-images of the relative interiors of the faces
of $\Delta$.  Hence, for any $\alpha \in \Delta$,
$$ \bigcup\limits_{\substack{F \text{ face of } \Delta \\ \alpha \in F}}
   \Phi^{-1}(\text{rel-int } F) $$
is the largest subset of $M$ that is centered about $\alpha$.
\end{example}
When the dimension of the torus acting on a compact symplectic manifold is less then half of the dimension of the manifold,
one can easily find a centered region from an x-ray of the Hamiltonian $T$-space $M$.
The \textbf{x-ray} of $(M,\omega,\phi)$ is the 
collection of convex polytopes 
 $\phi(X)$ over all connected compontents $X$ of $M^K$ for some subtorus $K$ of $T$ (for more details see \cite{To}). For the toric symplectic manifold, an x-ray is exactly the collection of faces of convex polytope that is the image of moment map.
Figure \ref{centered} presents some examples of centered regions, that we can see directly from the x-rays of $M$.
\begin{figure}[h!]\label{centered}
 \includegraphics[width=.6\textwidth]{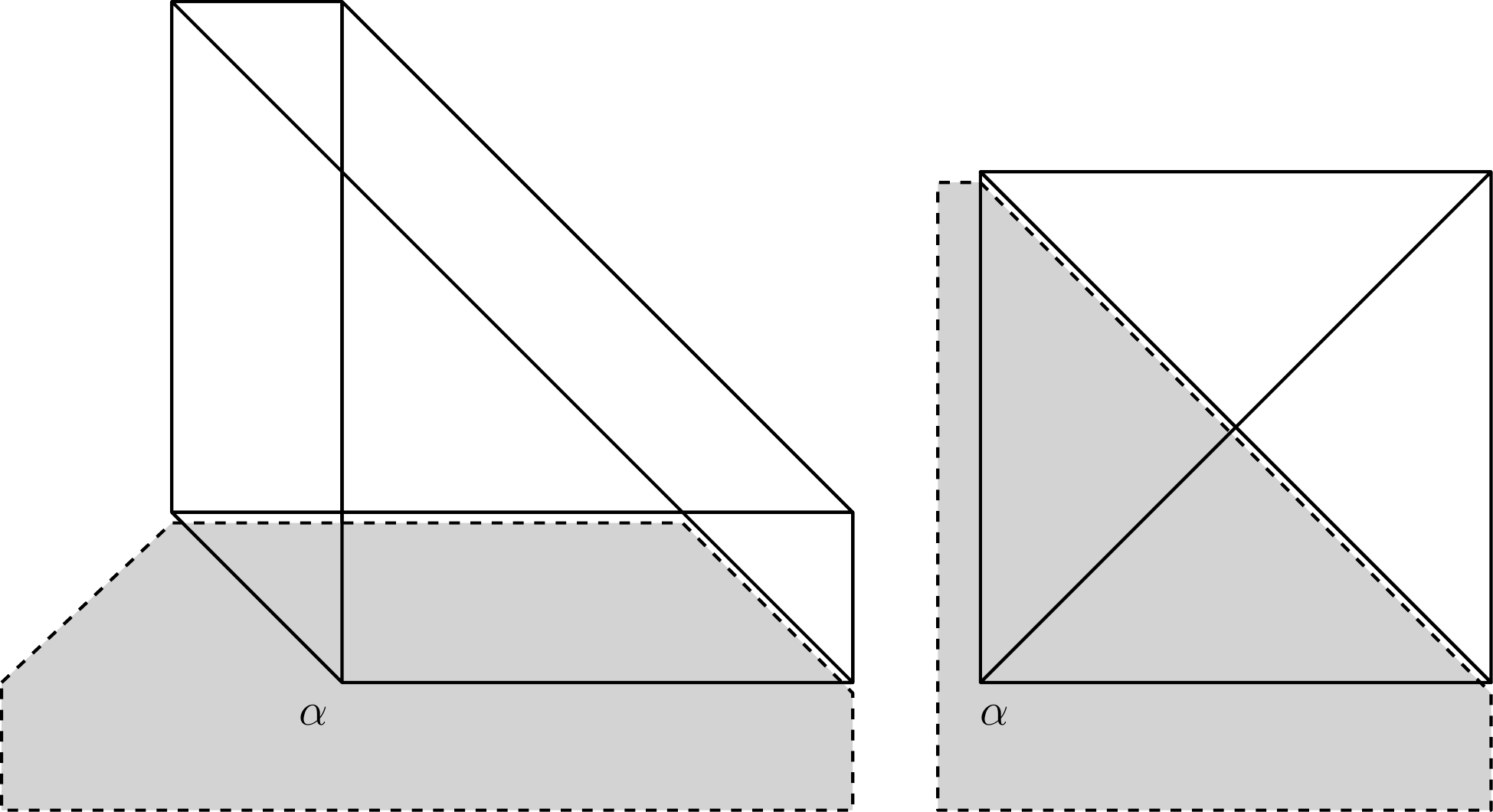}
\caption{The regions centered around $\alpha$.}
\end{figure}
\begin{example}
Let $(M,\omega,\Phi,\cc{T})$ be a proper Hamiltonian $T$-manifold.
Then every point in $\lie{t}^*$ has a neighborhood whose preimage is centered.
This is a consequence of the local normal form theorem and the properness
of the moment map.
\end{example}

\begin{proposition} (Karshon, Tolman, \cite{KT})\label{embed}
Let $(M,\omega,\Phi,\cc{T})$ be a proper Hamiltonian $T$-manifold.
Assume that $M$ is centered about $\alpha \in \cc{T}$ and that 
$\Phi^{-1}(\{\alpha\})$ consists of a single fixed point $p$.
Then $M$ is equivariantly symplectomorphic to 
$$\left\{ z \in \bb{C}^n \ | \ \alpha + \pi \sum |z_j|^2 \eta_j \in \cc{T} \right\},$$
where $\eta_1,\ldots,\eta_n$ are the isotropy weights at $p$.
\end{proposition}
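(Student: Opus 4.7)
The plan is to construct a $T$-equivariant symplectomorphism $\psi\colon W \to M$ intertwining moment maps, where
$W=\{z\in\bb{C}^n : \alpha+\pi\sum|z_j|^2\eta_j\in\cc{T}\}$
carries the standard symplectic form, the linear $T$-action with weights $\eta_j$, and the moment map $\Phi_W(z)=\alpha+\pi\sum|z_j|^2\eta_j$. Note that $W$ is itself a proper Hamiltonian $T$-manifold that is centered about $\alpha$, with $0$ the unique preimage of $\alpha$, so both sides have matching local structure at their distinguished fixed point.

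First I would invoke the equivariant Darboux theorem at $p$: since the isotropy representation on $T_pM\cong\bb{C}^n$ has precisely the weights $\eta_j$, one obtains a $T$-equivariant symplectomorphism $\psi_0\colon V\to U$ from a $T$-invariant open neighborhood $V\subset W$ of $0$ onto a $T$-invariant neighborhood $U$ of $p$ with $\psi_0(0)=p$ and $\Phi\circ\psi_0=\Phi_W$ on $V$ (the moment-map identity can be arranged using $\Phi(p)=\alpha=\Phi_W(0)$ and adding an appropriate constant).

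Next I would globalize by a maximality-plus-connectedness argument. Let $\Omega\subset W$ be the union of all $T$-invariant open sets on which $\psi_0$ extends to a $T$-equivariant, moment-map-preserving symplectic embedding into $M$. Then $\Omega$ is open, nonempty, and $T$-invariant, and it suffices to show that $\Omega$ is closed in $W$ so that $\Omega=W$. Given $q\in\partial\Omega$ with isotropy $K:=T_q$, apply the Marle-Guillemin-Sternberg local normal form around the $T$-orbit of $q$ in $W$, and around a limit point $\hat q\in M$ of a sequence $\psi(q_n)$ for $q_n\to q$ in $\Omega$ (such a limit exists because $\Phi$ is proper over a relatively compact neighborhood of $\Phi_W(q)\in\cc{T}$). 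Both normal forms are determined by $K$ and its symplectic slice representation, and the centered hypothesis forces a match: the component of $W^K$ through $q$ and the component of $M^K$ through $\hat q$ each contain a point of $\Phi_W\inv(\alpha)=\{0\}$, respectively $\Phi\inv(\alpha)=\{p\}$, so both slice representations are read off from the same sub-collection $\{\eta_j : \eta_j|_{\lie{k}}=0\}$ of isotropy weights at the unique fixed point. Uniqueness of the local normal form then provides an extension of $\psi$ across $q$ after a local equivariant automorphism of the model aligning it with $\psi$ on the overlap.

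Finally, surjectivity of $\psi$ follows because $\psi(W)\subset M$ is nonempty, open, and closed (the last by properness of $\Phi$), inside the connected manifold $M$. The main obstacle is the slice-matching step just described: even though centeredness pins down the isotropy data on both sides in terms of the single weight list $\eta_1,\ldots,\eta_n$, one has to align the two local Marle-Guillemin-Sternberg models in a way that is consistent with the already constructed $\psi$ on the overlap, and checking that such an alignment can always be made is the technical heart of the argument.
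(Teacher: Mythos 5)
The paper does not actually prove this proposition; it is imported verbatim from Karshon--Tolman \cite{KT}, so there is no internal proof to compare your attempt against. Judged on its own, your sketch follows the natural route---equivariant Darboux at $p$, continuation over the model $W$ using local normal forms, surjectivity by an open--closed argument in $M$---and you use centeredness in the right place: the component of $M^K$ through a limit point $\hat q$ must meet $\Phi\inv(\alpha)=\{p\}$, so its slice data is read off from the weights $\eta_j$ that vanish on $\lie{k}$.

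There is, however, a genuine gap in the globalization step, and it is not only the ``alignment'' issue you flag at the end. Defining $\Omega$ as the union of all invariant open sets to which $\psi_0$ extends does not produce a map on $\Omega$: moment-map-preserving equivariant symplectomorphisms of these local models are very far from unique. Already on $(\bb{C},\omega_{std})$ with the rotation action and moment map $\pi|z|^2$, every map $z\mapsto e^{2\pi\ii\,\theta(\pi|z|^2)}z$ is an equivariant, moment-preserving symplectomorphism, and by choosing $\theta$ supported away from $0$ one gets automorphisms equal to the identity near the origin but not globally. Hence extensions of $\psi_0$ to different members of your union need not agree on overlaps, extensions agreeing near $0$ need not agree elsewhere, and even knowing $\Omega=W$ would not yield a single global map. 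The open--closed scheme must therefore be replaced by the actual content of the Karshon--Tolman argument: a global rigidity statement to the effect that a proper Hamiltonian $T$-manifold centered about $\alpha$ is determined up to equivariant symplectomorphism by an invariant neighborhood of $\Phi\inv(\alpha)$, proved by analyzing the orbit-type stratification that centeredness forces rather than by naive continuation. Two smaller unproved assertions: your claim that $W$ itself is proper and centered with $\Phi_W\inv(\alpha)=\{0\}$ requires knowing that the $\eta_j$ lie in an open half-space of $\lie{t}^*$, which must be extracted from the hypothesis that $\Phi\inv(\alpha)$ is a single point; and the closedness of $\psi(W)$ in $M$ uses properness of $\Phi_W$, which is the same point.
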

\begin{example}
Consider a compact symplectic toric manifold $M$ with the following moment map image.
\begin{center}
\includegraphics[width=0.6\textwidth]{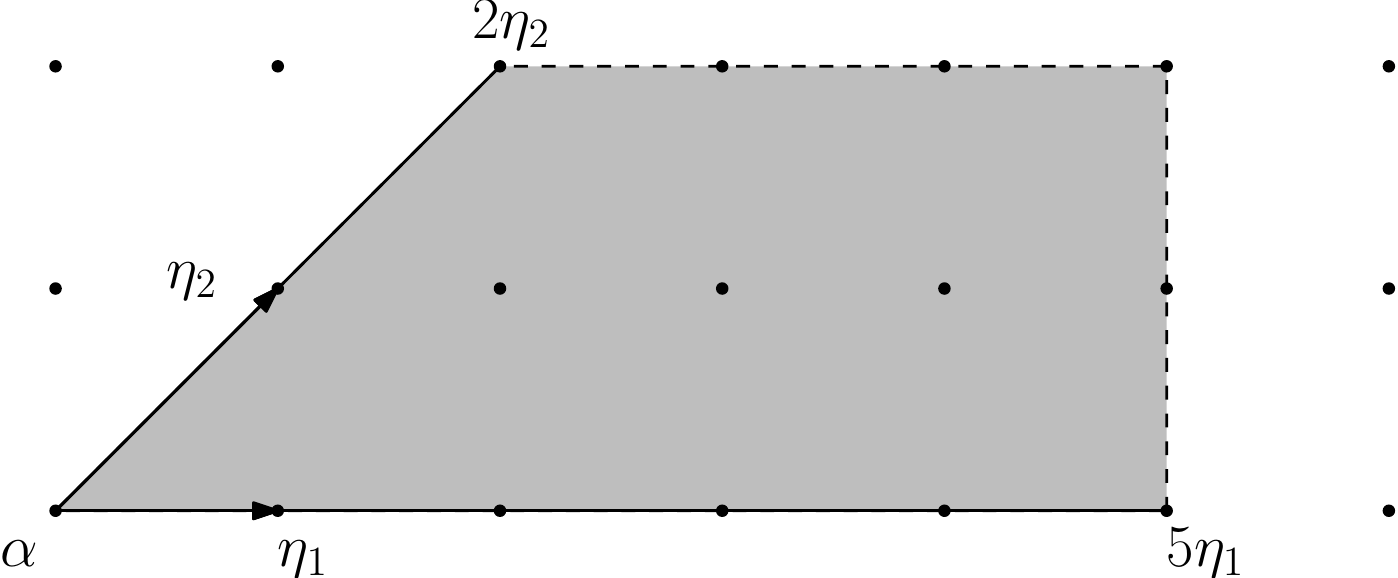}
\end{center}
The weights of the torus action are $\eta_1$ and $\eta_2$, and the lattice lengths of edges starting from $\alpha$ are $5$ and $2$
(with respect to weight lattice).
The largest subset of $M$ that is centered about $\alpha$, as described in Example \ref{largest}, maps under the moment map to the shaded region.\\
The above Proposition tells us that it is
equivariantly symplectomorphic to
$$\{z \in \bb{C}^2| \alpha + \pi (|z_1|^2 \eta_1+|z_2|^2\eta_2) \in \textrm{ shaded region } \}.$$
If $z\in B^{2}_2 =  \{ z \in \bb{C}^2 \ \Big |  \pi (|z_1|^2 \eta_1 +|z_2|^2 \eta_2) <2\}$
then $\alpha + \pi (|z_1|^2 \eta_1 +|z_2|^2 \eta_2)$ is in the shaded region.
Therefore the ball $B^{2}_2$ of capacity $2$ embeds into $M$ and the Gromov width of $M$ is at least the minimum of lattice lengths of edges of the moment polytope, starting at $\alpha$.
 \end{example}
\subsection{Standard torus action on a coadjoint orbit.}\label{standard action} 
Under our identifications, the coadjoint action of $U(n)$ on $\lie{u}(n)^*$ is by conjugation: $A\cdot \xi=A \xi A\inv$. 
restricted to an orbit $\cc{O}_{\lambda}$, this action is Hamiltonian with moment map inclusion $\cc{O}_{\lambda} \hookrightarrow \lie{u}(n)^*$.
Let $T$ be maximal torus in $U(n)$.
As explained in the introduction, we identify  
$\lie{u}(n)^*$ with the space of $n \times n$ Hermitian matrices. We will use coordinates $\{e_{ij}\}$, with $e_{ij}$ correspondig to $(i,j)$-th entry of a matrix.
We choose the positive Weyl chamber, $(\lie{t}^*)_+$, 
to be 
$$(\lie{t}^*)_+:=\{ \diag( \lambda_{11}, \lambda_{22}, \ldots , \lambda_{nn});\,  \lambda_{11} \geq \lambda_{22} \geq \ldots  \geq \lambda_{nn}\}.$$
Then $\Delta=\{e_{ii}-e_{jj}\,|\, i \neq j\}$ is a root system and $\Sigma =\{e_{ii}-e_{i+1,i+1}\,|\, i=1,2,\ldots, n-1\}$ is the set of positive roots.
The coadjoint orbits in $\lie{u}(n)^*$ are in one-to-one correspondence with the points of $(\lie{t}^*)_+$.
Precisely, for any $(\lambda_{11},\lambda_{22}, \ldots , \lambda_{nn}) \in (\lie{t}^*)_+$ the corresponding coadjoint orbit is the set of 
all Hermitian matrices with eigenvalues $(\lambda_{11},\lambda_{22}, \ldots , \lambda_{nn})$.
Fix some $\lambda= (\lambda_{11} \geq \lambda_{22} \geq \ldots  \geq \lambda_{nn} ) \in (\lie{t}^*)_+$ and denote by $\cc{O}_{\lambda}$
the coadjoint orbit through $\lambda$. The standard $T^n$ action on $\cc{O}_{\lambda}$ is the action of the maximal torus $T^n \subset U(n)$. The fixed points of this action are the diagonal matrices.
In particular, $\lambda$ is a fixed point and the weights of $T^n$ action on $T_{\lambda}\cc{O}_{\lambda}$ are given by the negative 
roots $-\Sigma$.
The $T^n$ action is Hamiltonian with moment map $\mu: \cc{O}_{\lambda} \rightarrow (\lie{t}^n)^* \cong \bb{R}^n$ that maps a matrix $A=(a_{ij})$ 
to the diagonal $n \times n$ matrix $\diag\,(a_{11}, \ldots, a_{nn})$.
For any $j=1, \ldots, n$, we have a natural embedding $\iota_{j}:U(j)\rightarrow U(n)$ 
\begin{equation*}
 \iota_{j}(B)= \left(\begin{array}{c|cc}
    B  & 0& \\
\hline
0&  Id &   \\
    \end{array}
\right),
\end{equation*}
where $B \in U(j)$. Using this embedding we obtain a $U(j)$ (and also $T^{j}$) action 
on $\cc{O}_{\lambda}$: for $B \in U(j)$ and $\xi \in \cc{O}_{\lambda}$, we define $$B \cdot \xi = \iota_{j}(B)\, \xi\, (\iota_{j}(B))^{-1}.$$
To simplyfy the notation, we will often write $B$ instead of $\iota_{j}(B)$.
Both of these actions are also Hamiltonian.
The moment map for the $U(j)$ action is the projection
$$ \Phi^{j}: \cc{O}_{\lambda} \rightarrow \lie{u}(j)^*$$
 sending every matrix to its $j\times j$
submatrix in top left corner. The moment map for the $T^{j}$ action 
$$\mu^{j}: \cc{O}_{\lambda} \rightarrow (\lie{t}^{j})^*$$
 sends the matrix $(a_{ij})$ 
to the diagonal $j \times j$ matrix $\diag\,(a_{11}, \ldots, a_{jj})$. In this way, we obtain additional Hamiltonian torus actions on $\cc{O}_{\lambda}$. However the dimension of torus acting effectively is much less then half of the dimension of the coadjoint orbit, so this action is still not toric. The Gelfand-Tsetlin pattern that we construct in the next section gives an action of an even bigger dimensional torus.\\
\indent
Now we analyze the moment map image $\cc{Q}=\mu(\cc{O}_{\lambda}) \subset (\lie{t}^n)^*$ for the standard $T^n$ action. The Vertices of $\cc{Q}$ correspond to the $T^n$-fixed points, that is, the diagonal matrices in $\cc{O}_{\lambda}$.
If $\lambda$ is generic, then the vertices correspond exactly to permutations on $n$ elements.
Thus there are exactly $n!$ of them.
If $\lambda$ is non-generic, say 
$$\lambda_1=\ldots =\lambda_{l_1} >\lambda_{l_1+1}=\ldots= \lambda_{l_1+l_2}> \ldots > \lambda_{n-l_s+1}= \ldots= \lambda_n ,$$
then the vertices correspond to cosets $S_n/( S_{l_1} \times \ldots \times S_{l_s})$, and there are
$\frac{n!}{l_1!\ldots l_s!}$ of them.
The coadjoint orbit $\cc{O}_{\lambda}$ with the standard $T^n$ action is a \textbf{GKM space}.
This means that the closure of every connected component of the set
$\{x \in \cc{O}_{\lambda};\, \dim (T^n \cdot x)=1\}$ is a sphere. The closure of $\{x \in \cc{O}_{\lambda};\, \dim (T^n \cdot x)=1\}$ is called \textbf{$1$-skeleton} of $\cc{O}_{\lambda}$. Denote by $\cc{Q}_1$ the image of $1$-skeleton under the moment map. Then $\cc{Q}_1$ is a graph with vertices $V(\cc{Q}_1)=V(\cc{Q})$ corresponding to $T^n$-fixed points and edges corresponding to closures 
of connected compontents of the $1$-skeleton. Note that not all edges in $\cc{Q}_1$ are edges of the polytope $\cc{Q}$. 
Images of two fixed points, $F$ and $F'$, are connected by an edge in $\cc{Q}_1$ if and only if they differ by one transposition of two different diagonal entries. Therefore there are exactly 
$$D:=[\,l_1(l_2 + \ldots l_s) + l_2 (l_3 + \ldots +l_s) + \ldots + l_{s-1}l_s\,]= \sum_{i<j}l_il_j$$
edges leaving any vertex of $\cc{Q}_1$ and thus $\dim\,\cc{O}_{\lambda}=D\,\dim(S^2)=2D$.
In the case of generic $\lambda$, the moment polytope of $\cc{O}_{\lambda}$ is called a permutahedron.
\\ \indent Denote the diagonal entries of $F$ by $F_{11}, \ldots ,F_{nn}$.
Let $p < q$ be indices from $\{1, \ldots n \}$ such that $F_{pp} \neq F_{qq}$
and $F'$ is the matrix obtained from $F$ by switching $p$-th and $q$-th entry.
The edge joining $\mu(F)$ and $\mu(F')$ is an $\mu$-image of a sphere in $\cc{O}_{\lambda}$ defined in the following way.
Denote $F_{pp}=v_i,\  F_{qq}=v_k$.
For any $z \in \bb{C}$ let $I_z$ be the matrix obtained from the identity matrix by changing four entries $(j,k)$ with $j,k \in \{p,q\}$ 
in the way presented below and let $F_z=I_z FI_z\inv$ be the matrix obtained from $F$ by conjugation with $I_z$. This means that $F_z$ differs from $F$ only at four entries $(j,k)$ with $j,k \in \{p,q\}$.
The matrices have the following shapes
\begin{displaymath}
I_z=\left[ \begin{array}{ccccc}
I &     \vdots  &  & \vdots& \\
   \ldots   & \frac{1}{Z}&  \ldots  & \frac{-\bar{z}}{Z} &  \ldots \\
 & \vdots      & I & \vdots& \\
   \ldots   & \frac{z}{Z} &  \ldots  & \frac{1}{Z} &  \ldots \\
 & \vdots      &  & \vdots& I\\
\end{array}
\right],\,\,\,
F_z=\left[ \begin{array}{ccccc}
\ddots & \vdots      & 0 & \vdots& 0\\
   \ldots   & \frac{(v_i +|z|^2 v_{k})}{Z}&  \ldots  & \frac{\bar{z}(v_i - v_{k})}{Z} &  \ldots \\
0 & \vdots      & \ddots & \vdots& 0\\
   \ldots   & \frac{z(v_i - v_{k})}{Z} &  \ldots  & \frac{(v_{k} +|z|^2 v_i)}{Z} &  \ldots \\
0 & \vdots      & 0 & \vdots& \ddots\\
\end{array}
\right]
\end{displaymath}
where $Z=\sqrt{1+|z|^2}$.
For more details about the moment image of standard torus action see for example \cite{Ty},\cite{MRS}.

\subsection{Gelfand-Tsetlin system}\label{system}
In this subsection we recall the Gelfand-Tsetlin system of action coordinates, which originally appeared in \cite{GT}.
There are many references describing this system, for example \cite{GS}, \cite{K}, \cite{NNU}, \cite{H}.
For the readers' convenience and to fix the notation, we follow Mikhai Kogan's construction for a coadjoint $U(n)$ orbit in $\lie{u}(n)^*$, \cite{K}.
\\ \indent
Recall that the moment map for the $U(j)$ action on $\cc{O}_{\lambda}$, denoted $\Phi^{j}$,
maps $A \in \cc{O}_{\lambda}$ to $j \times j$ top left submatrix of $A$.
Denote the eigenvalues of $\Phi^{j}(A)$, ordered in a non-increasing way, by 
$$\lambda^{(j)}_1(A) \geq  \lambda^{(j)}_2(A) \geq \ldots \geq \lambda^{(j)}_{j}(A).$$
We will use the notation $\Lambda^{(j)}=(\lambda^{(j)}_1, \ldots, \lambda^{(j)}_{j}):\cc{O}_{\lambda} \rightarrow (\lie{t}^{j})^*_+ \hookrightarrow \bb{R}^j$, 
for a function sending $A$ to $(\lambda^{(j)}_1(A), \ldots  , \lambda^{(j)}_{j}(A))\in  \bb{R}^j$. Here we identify $(\lie{t}^{j})^*$ with $\bb{R}^j$ using pairings with positive roots. 
For $j=0$, we just get $\Phi^{n}(A)=A$ and $\lambda^{(n)}_j(A)=\lambda_j$.
The \textbf{Gelfand -Tsetlin system of action coordinates} is the collection of the functions $\lambda^{(j)}_{j}$ for $k=1, \ldots, n-1$ and $j=1, \ldots, k$.
We will denote them by $$\Lambda:\cc{O}_{\lambda} \rightarrow \bb{R}^N,$$ where
$$N:=(n-1)+(n-2)+ \ldots +1=\frac{n(n-1)}{2}.$$

Notice that $\Lambda^{(j)}$ is a composition of $\Phi^j$ and a map $s_j: \lie{u}(j)^* \rightarrow (\lie{t}^j)^*_+ \subset \bb{R}^j$ sending a point in $\lie{u}(j)^*$ to the unique point of intersection
of its $U(j)$ orbit with the positive Weyl chamber.
\begin{displaymath}
\xymatrix{
\mathcal{O}_{\lambda} \ar[r]^{\Phi^j} \ar[rd]_{\Lambda^{(j)}} &
\lie{u}(j)^*  \ar[d]^{s_j}&
 \\
&(\lie{t}(j))^*_+
} 
\end{displaymath}
Components of $s_j$ are $U(j)$ invariant, so they Poisson commute.
After precomposing them with $\Phi^j$, we get a family of Poisson commuting functions on $\cc{O}_{\lambda}$ (see Proposition 3.2 in \cite{GS}).
These are exactly $\lambda^{(j)}_1, \lambda^{(j)}_2, \ldots ,\lambda^{(j)}_{j}.$
For $l<j$ denote by $\kappa_{lj}:\lie{u}(j)^* \rightarrow \lie{u}(l)^* $ the transpose of the map $\lie{u}(l) \rightarrow \lie{u}(j) $
induced by the inclusion.
The functions $$\lambda^{(j)}_1, \lambda^{(j)}_2, \ldots ,\lambda^{(j)}_{j}, \lambda^{(l)}_1\circ\kappa_{lj}, \lambda^{(l)}_2\circ\kappa_{lj}, \ldots ,\lambda^{(l)}_{l}\circ\kappa_{lj}$$
Poisson commute on $\lie{u}(l)^* $ by Proposition 3.2 in \cite{GS} and the fact that first $j$ of them are $U(j)$ invariant.
Therefore all Gelfand-Tsetlin functions Poisson commute on $\cc{O}_{\lambda}$.

The classical mini max principle (see for example Chapter I.4 in \cite{CH}) implies that 
\begin{equation*}
 \lambda^{(l+1)}_j(A) \geq  \lambda^{(l)}_j(A) \geq \lambda^{(l+1)}_{j+1}(A).
\end{equation*}
We use the following notation for these inequalities:
\begin{equation}\label{ineq}
 \begin{array}{ccc}
 A_{l,j}:& & \lambda^{(l+1)}_j(A) \geq  \lambda^{(l)}_j(A),\\
B_{l,j}:& &  \lambda^{(l)}_j(A) \geq \lambda^{(l+1)}_{j+1}(A).
\end{array}
\end{equation}

The inequalities (\ref{ineq}) cut out a polytope in $\bb{R}^N$, which we denote by $\cc{P}$. 
\section{The action of the Gelfand-Tsetlin torus}\label{actionsection}
\subsection{Smoothness of the Gelfand-Tsetlin functions}\label{smooth}
The function $\lambda^{(j)}_k$ need not be smooth on the whole orbit $\cc{O}_{\lambda}$. 
The eigenvalues depend smoothly on the matrix entries, but this property is not preserved when reordering them in a non-increasing way.
They are smooth, however, on a dense open subset of $\cc{O}_{\lambda}$. To identify this subset we will need the following result proved in \cite{CDM}. 
This theorem is also true for orbifolds: see \cite[Theorem 3.1]{LMTW}.
\begin{theorem}\label{princ cross-section}
Let $G$ be a compact connected Lie group with a maximal torus $T$. 
Suppose $G$ acts on a compact connected symplectic manifold $M$ in a Hamiltonian way, with moment
map $\Phi: M \rightarrow \lie{g}^*$.
Then there exists a unique open wall $\sigma$ of the  Weyl chamber 
$ \lie{t}^*_+$ with 
the properties that 
$\Phi(M)\cap \lie{t}^*_+ \subset \bar{\sigma}$ and 
$\Phi(M)\cap \lie{t}^*_+\cap \sigma \neq \emptyset$. 
\end{theorem}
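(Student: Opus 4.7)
The plan is to read off $\sigma$ directly from the convexity of the moment image. By the nonabelian convexity theorem of Kirwan, the intersection $P:=\Phi(M)\cap\lie{t}^*_+$ is a convex polytope. The closed Weyl chamber $\bar{\sigma_0}$ is partitioned into open walls $\sigma_I$ indexed by subsets $I$ of the set $\Sigma$ of simple roots, where
\[
\sigma_I=\{x\in\bar{\sigma_0}\mid \alpha(x)=0\ \forall \alpha\in I,\ \alpha(x)>0\ \forall \alpha\in \Sigma\setminus I\}.
\]
The candidate wall is then forced on us: set $I:=\{\alpha\in\Sigma\mid \alpha|_P\equiv 0\}$ and $\sigma:=\sigma_I$.

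For existence, the inclusion $P\subset\bar\sigma$ is immediate, since every $\alpha\in\Sigma$ is nonnegative on $P\subset\bar{\sigma_0}$ and those in $I$ vanish on $P$ by definition. To see that $P\cap\sigma\neq\emptyset$, I would pick any point $p$ in the relative interior of $P$ and show $\alpha(p)>0$ for every $\alpha\in\Sigma\setminus I$. This is the one spot where convexity is really used: the restriction $\alpha|_P$ is an affine function which is $\geq 0$ on $P$, not identically zero (as $\alpha\notin I$); an affine function on a convex set that attains a minimum at a relative interior point must be constant, so $\alpha|_P$ cannot vanish at $p$.

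For uniqueness, suppose $\sigma'=\sigma_J$ also satisfies $P\subset\bar{\sigma_J}$ and $P\cap\sigma_J\neq\emptyset$. From $P\subset\bar{\sigma_J}$ we get $\alpha|_P\equiv 0$ for every $\alpha\in J$, so $J\subset I$. Picking a point $q\in P\cap\sigma_J$ and an index $\alpha\in I\setminus J$ would give simultaneously $\alpha(q)=0$ (because $\alpha\in I$) and $\alpha(q)>0$ (because $\alpha\notin J$ and $q\in\sigma_J$), a contradiction. Hence $I=J$ and $\sigma$ is unique.

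The only nontrivial input is the nonabelian convexity theorem guaranteeing that $P$ is a convex polytope; once that is in hand, the rest of the argument is bookkeeping about how affine functions interact with relative interiors of convex sets. I do not expect any serious obstacle beyond correctly packaging the relative interior argument in step two, which is the one place the hypothesis that $M$ is compact and connected (via Kirwan) is genuinely used.
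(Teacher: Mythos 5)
Your argument is correct, but it is worth knowing that the paper does not prove this statement at all: it is quoted from Condevaux--Dazord--Molino \cite{CDM} (see also \cite[Theorem 3.1]{LMTW}), so any proof you give is necessarily a different route. Your route is to deduce the theorem from Kirwan's nonabelian convexity theorem: once $P=\Phi(M)\cap\lie{t}^*_+$ is known to be convex (and nonempty, which you should say explicitly --- it holds because $\Phi(M)$ is a union of coadjoint orbits and every coadjoint orbit meets $\lie{t}^*_+$), the wall is forced to be the one containing the relative interior of $P$, and your bookkeeping with the sets $I$, $J$ of vanishing simple roots is sound; the one fact you lean on, that an affine function on a convex set attaining its minimum at a relatively interior point is constant, is standard. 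The standard proofs in the literature go the other way around: \cite{CDM} and \cite{LMTW} establish the principal wall via the principal orbit type theorem and symplectic cross-sections, \emph{before} and as an ingredient of the convexity theorem. So your derivation is logically valid only because Kirwan's theorem has proofs (e.g.\ the Morse-theoretic one via $\|\Phi\|^2$) that do not pass through the cross-section theorem; if one intended to use this theorem as a lemma toward convexity, your argument would be circular. The trade-off is clear: you get a short, purely convex-geometric proof at the cost of invoking a much deeper input, while the classical proof is longer but self-contained and also yields the extra structure (the principal cross-section $\Phi^{-1}(\sigma)$ being a symplectic submanifold) that the paper actually uses later.
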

Let $\sigma=\sigma_j$ be the unique open wall from the above theorem applied to the standard $G=U(j)$ action on $M=\cc{O}_{\lambda}$. We call $\sigma$ the \textbf{principal face.} Any wall of positive Weyl chamber $ (\lie{t}^{j})_+^*$ that contains $\sigma$ is called a \textbf{ special wall}, while all the others walls are called \textbf{regular walls}.
Thus $\overline{\sigma}$ is the intersection of all special walls, and $\sigma =\overline{\sigma} \setminus (\cup \textrm{ regular walls})$.
Walls of $ (\lie{t}^{j})^*_+$ are defined by a collection of equations of the form 
$\lambda^{(j)}_{L}=\lambda^{(j)}_{L+1}$.
If a wall $\tau$ is special, i.e. $\overline{\sigma} \subset \tau$, then its defining equations hold on the whole $\Lambda(\cc{O}_{\lambda})$.
For any regular wall $\tau$, there is at least one of its defining equations, and some $A \in \cc{O}_{\lambda}$ such that $\Lambda(A)$ does not satisfy this equation.
\begin{proposition}
 The function $\Lambda^{(j)}$ is smooth on the set $U^{(j)}=(\Lambda^{(j)})\inv(\sigma)$.
\end{proposition}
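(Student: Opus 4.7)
The plan is to reduce the claim to the fact that if a smoothly varying family of Hermitian matrices has a \emph{constant} multiplicity pattern for its spectrum, then each distinct eigenvalue (and hence each ordered eigenvalue) depends smoothly on the parameter. Smoothness of the map $A\mapsto \Phi^{j}(A)$ is immediate since it is linear, so the whole question reduces to studying the map $s_{j}\colon \lie{u}(j)^{*}\to(\lie{t}^{j})^{*}_{+}$ on the image $\Phi^{j}(U^{(j)})$.

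First I would unpack what $\sigma$ forces. By Theorem \ref{princ cross-section}, the special walls, i.e.\ those containing $\sigma$, are cut out by equations $\lambda^{(j)}_{L}=\lambda^{(j)}_{L+1}$ which hold identically on $\Lambda^{(j)}(\cc{O}_{\lambda})$. Group the indices $\{1,\dots ,j\}$ into consecutive blocks $I_{1},\dots ,I_{r}$ of sizes $L_{1},\dots ,L_{r}$ according to which pairs $(L,L+1)$ belong to a special wall. By definition of $\sigma$, any wall whose equation is \emph{not} forced by the special walls is regular, and on $\sigma$ the corresponding strict inequality $\lambda^{(j)}_{L}>\lambda^{(j)}_{L+1}$ holds. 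Consequently, for every $A\in U^{(j)}$ the top-left submatrix $\Phi^{j}(A)$ has exactly $r$ distinct eigenvalues $\mu_{1}(A)>\mu_{2}(A)>\dots >\mu_{r}(A)$ with fixed multiplicities $L_{1},\dots ,L_{r}$, and each ordered eigenvalue $\lambda^{(j)}_{i}(A)$ coincides with the $\mu_{k}(A)$ determined by $i\in I_{k}$.

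It therefore suffices to show that each $\mu_{k}$ is smooth on $U^{(j)}$. This is a local statement, so fix $A_{0}\in U^{(j)}$ and choose small disjoint positively oriented circles $\gamma_{1},\dots ,\gamma_{r}$ in $\bb{C}$ with $\gamma_{k}$ enclosing only the eigenvalue $\mu_{k}(A_{0})$. By continuity of the spectrum, for all $A$ in a neighborhood $\cc{U}$ of $A_{0}$ in $U^{(j)}$ the contour $\gamma_{k}$ encloses precisely the eigenvalue $\mu_{k}(A)$, with the same multiplicity $L_{k}$. The holomorphic functional calculus then produces a smooth family of spectral projectors
\begin{equation*}
P_{k}(A) \;=\; \frac{1}{2\pi \ii}\oint_{\gamma_{k}} \bigl(z\,\id-\Phi^{j}(A)\bigr)^{-1}\,\dd z,
\end{equation*}
which is smooth in $A\in\cc{U}$ because the integrand is jointly smooth in $z\in\gamma_{k}$ and $A$. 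Taking the trace of $\Phi^{j}(A)P_{k}(A)$ gives $L_{k}\mu_{k}(A)$, so
\begin{equation*}
\mu_{k}(A) \;=\; \frac{1}{L_{k}}\,\mathrm{trace}\bigl(\Phi^{j}(A)P_{k}(A)\bigr)
\end{equation*}
is smooth on $\cc{U}$, and hence $\lambda^{(j)}_{i}=\mu_{k}$ (for $i\in I_{k}$) is smooth on $U^{(j)}$.

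I expect the only subtle point is the first step: justifying that the multiplicity pattern is exactly the same at every point of $U^{(j)}$, with no additional coincidences. This is where Theorem \ref{princ cross-section} is essential: the principal face $\sigma$ is the unique open wall intersecting $\Lambda^{(j)}(\cc{O}_{\lambda})$, so any equality $\lambda^{(j)}_{L}=\lambda^{(j)}_{L+1}$ holding on $\sigma$ must already hold on the entire image. Once this is in place, the contour-integral argument is essentially formal.
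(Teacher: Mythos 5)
Your argument is correct, but it takes a genuinely different route from the paper. Both proofs begin the same way, reducing the problem to smoothness of the projection $s_{j}\colon\lie{u}(j)^{*}\to(\lie{t}^{j})^{*}_{+}$ on $s_{j}^{-1}(\sigma)$ since $\Phi^{j}$ is linear, and both hinge on the same key input from Theorem \ref{princ cross-section}: every point of $\sigma$ has the same eigenvalue-coincidence pattern (equivalently, the same coadjoint stabilizer $H$), because special-wall equations hold identically on the image while regular-wall equations fail throughout $\sigma$. From there the paper argues Lie-theoretically: it identifies $S=s_{j}^{-1}(\sigma)$ as the orbit-type stratum $(\lie{g}^{*})_{(H)}$, hence a submanifold, shows the natural bijection $G/H\times\sigma\to S$ is a diffeomorphism, and reads off $s_{j}|_{S}$ as the second-factor projection composed with its inverse. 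You instead invoke perturbation theory for Hermitian matrices with locally constant multiplicity pattern, writing each distinct eigenvalue as $\mu_{k}=\frac{1}{L_{k}}\mathrm{trace}(\Phi^{j}(A)P_{k}(A))$ with $P_{k}$ a Riesz spectral projector given by a contour integral. Your approach is more elementary and analytic, and it buys something slightly stronger: the contour formula defines a smooth function on an \emph{open} neighborhood of each point of $U^{(j)}$ in $\cc{O}_{\lambda}$ that restricts to $\lambda^{(j)}_{i}$ on $U^{(j)}$, whereas the paper's argument only gives smoothness along the stratum $S$. The paper's approach, on the other hand, exposes the equivariant structure ($S\cong G/H\times\sigma$) that is reused later when the Thimm-trick torus action is constructed. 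The one point worth stating explicitly in your write-up is the continuity of the spectrum used to guarantee that $\gamma_{k}$ separates the eigenvalue clusters uniformly on a neighborhood of $A_{0}$; this is standard and does not affect correctness.
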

\begin{proof}
To simplify the notation, we will denote $U(j)$ by $G$, and  the maximal torus in $U(j)$ simply by $T$.
Recall that the function $\Lambda^{(j)}$ is a composition of a smooth function $\Phi^{j}$ and projection 
$\pi:\lie{g}^*=\lie{u}(j)^* \rightarrow \lie{t}^*_+$. Therefore we only need to prove smoothness of the projection $\pi$ on $\Phi^{j}(U^{(j)})=\pi\inv(\sigma)$.
Note that all points in $\sigma$ have the same $G$-stabilizer (under the coadjoint action of $G$). Denote it by $H$.
Let $S$ be the subset of $\lie{g}^*$ equal to $\pi \inv (\sigma)$.  
This means that $S=(\lie{g}^*)_{(H)}$ is an orbit-type stratum and therefore it is a submanifold of $\lie{g}^*$.
Consider the smooth, $G$-equivariant, surjective map:
\begin{displaymath}
 \begin{array}{rcl}
 G \times \sigma & \rightarrow &S\\
(g,x)& \rightarrow & g\cdot x
\end{array}
\end{displaymath}
This map induces $G$-equivariant bijective map
\begin{displaymath}
 \begin{array}{rcl}
\Theta:G/H \times \sigma &\rightarrow S,\\
([g],x)& \rightarrow & g\cdot x
\end{array}
\end{displaymath}
which is also smooth (as $S$ is a manifold) and therefore it is a diffeomorphism (see for example Propositions 5.19 and 5.16 in \cite{Lee}).\\
Notice that the composition, $\pi \circ \Theta$
\begin{displaymath}
 \begin{array}{rcl}
 G/H \times \sigma & \rightarrow &\lie{t}^*_+\\
([g],x)& \rightarrow & x
\end{array}
\end{displaymath}
is just the projection onto second factor, therefore it is smooth.
This means that on $S$, $\pi$ is smooth, as a composition of $\Theta \inv$ and a smooth projection.
It follows that the function $\Lambda^{(j)}$ is smooth on the set $(\Phi^{j})\inv(S)=(\Lambda^{(j)})\inv(\sigma)=U^{(j)}$.
\end{proof}
\begin{remark}\label{moresmoothness}
The set of smooth points for $\Lambda^{(j)}$ may be strictly bigger than $U^{(j)}$.
For example, suppose that a function $\lambda^{(j)}_k$ is constant on the whole orbit $\cc{O}_{\lambda}$, 
and let $A$ be a point in $\cc{O}_{\lambda}$ such that $\lambda^{(j)}_k(A)=\lambda^{(j)}_{k+1}(A)$.
Suppose further that if for any $l \neq k$ we also have $\lambda^{(j)}_l(A)=\lambda^{(j)}_{l+1}(A)$
then $\lambda^{(j)}_l$ and $\lambda^{(j)}_{l+1}$ are equal on the whole $\cc{O}_{\lambda}$. 
In this case, the function $$\lambda^{(j)}_{k+1}=\textrm{trace} \circ \Phi^{j}-\sum_{l\neq k+1}\lambda^{(j)}_{l}$$ is smooth at the point $A$, 
as a difference of smooth functions, 
although $A$ is not in the set $U^{(j)}$ as defined above.
Proving the smoothness of the Gelfand-Tsetlin functions on a set bigger then $U^{(j)}$ would allow us to apply the proof of the main theorem to a wider class of non-generic coadjoint orbits.
The theorem holds if only there is a $T^n$-fixed point equipped with a smooth action of Gelfand-Tsetlin torus $T^D$.
Our techniques may be extended to coadjoint orbits with an additional eigenvalue repeating twice. The technical details became far more cumbersome, though, so we do not include them here.
\end{remark}


\subsection{The Torus action induced by the Gelfand-Tsetlin system.}\label{action}
At the points where $\Lambda^{(j)}$ is smooth, it induces a smooth action of $T^{j}$.
The process of obtainin this new action is often referred to as the \textbf{Thimm trick}.
An element $t \in T^{j}$ acts on a point $A \in \cc{O}_{\lambda}$ by the standard $U(j)$ action of $B^{-1}\, t\, B$, 
where $B \in U(j)$ is such that 
$B\,\Phi^{j}(A)\, B^{-1} \in (\lie{t}^{j})^*_+$. Denote this new action by $*$.
\begin{proposition}
The new $T^{j}$ action defined above is Hamiltonian on the subset $U^{(j)}=(\Lambda^{(j)})\inv(\sigma)$,
with moment map $\Lambda^{(j)}$.
\end{proposition}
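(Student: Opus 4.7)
My plan is to verify the proposition by explicitly computing the Hamiltonian vector field of each coordinate $\lambda^{(j)}_k$ on the set $U^{(j)}$ where it was just shown to be smooth, identifying the resulting flow with a periodic one-parameter subgroup of the action $\ast$ defined just above the proposition, and then gluing the circle actions via the Poisson-commutativity of the Gelfand--Tsetlin system already established in \S\ref{system}.

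The key step is as follows. Fix $A \in U^{(j)}$ and choose $B \in U(j)$ with $B \Phi^{j}(A) B^{-1} = \beta \in \sigma$. Let $E_{kk} \in \lie{t}^j \subset \lie{u}(j)$ be the standard $k$-th diagonal matrix unit, and consider the curve $\gamma(t) = (B^{-1}\exp(tE_{kk}) B) \cdot_{U(j)} A$. Because $\exp(tE_{kk})$ fixes $\beta$ under coadjoint action, one checks that $\Phi^{j}(\gamma(t)) = \Phi^{j}(A)$, so $\gamma(t)$ stays in $U^{(j)}$ and the same $B$ keeps diagonalizing $\Phi^{j}(\gamma(t))$ for all $t$. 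Differentiating yields $\dot\gamma(t) = (B^{-1}E_{kk}B)_{\cc{O}_\lambda}(\gamma(t))$, and the flow is manifestly periodic. To recognize $\gamma$ as the Hamiltonian flow of $\lambda^{(j)}_k$, I will combine the moment-map identity for the standard $U(j)$-action, $\omega(\xi_{\cc{O}_\lambda},\cdot) = -d\iprod{\Phi^{j}}{\xi}$, with the classical first-order perturbation formula for a simple eigenvalue $\beta_k$:
$$d\lambda^{(j)}_k\big|_A(Y) \;=\; \iprod{d\Phi^{j}|_A(Y)}{B^{-1}E_{kk}B}, \qquad Y \in T_A \cc{O}_\lambda,$$
which forces $X_{\lambda^{(j)}_k}(A) = (B^{-1}E_{kk}B)_{\cc{O}_\lambda}(A)$.

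Once each $X_{\lambda^{(j)}_k}$ is so identified, $B$-independence of the vector field is automatic: different choices of $B$ differ by an element of $\mathrm{Stab}(\beta)$, and on the principal face that stabilizer fixes $E_{kk}$ (after, in the case of a repeated eigenvalue, averaging over the multiplicity block $I_k = \{\ell : \beta_\ell = \beta_k\}$, which by Theorem \ref{princ cross-section} is of constant size throughout $\sigma$). The Poisson-commutativity of $\lambda^{(j)}_1,\ldots,\lambda^{(j)}_j$ established in \S\ref{system} implies these flows commute, hence they glue to a smooth $T^j$-action on $U^{(j)}$ whose $k$-th component moment map is $\lambda^{(j)}_k$; assembling yields $\Lambda^{(j)}$ as the moment map, and comparing infinitesimal generators shows this action coincides with $\ast$.

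The main technicality is the repeated-eigenvalue case: when $\beta_k$ coincides with some neighbor, the perturbation formula above is not literally valid for the individual eigenvalue. The fix is to replace $E_{kk}$ by the block-average $P_k = \frac{1}{|I_k|}\sum_{\ell \in I_k}E_{\ell\ell}$ and to use that $\operatorname{tr}(\Phi^{j} \cdot Q_k)$, where $Q_k$ is the (smoothly varying, constant-rank) spectral projector onto the $I_k$-block, is smooth with differential given by the standard formula. Rigidity of the multiplicity pattern along $\sigma$, again guaranteed by Theorem \ref{princ cross-section}, is exactly what makes $Q_k$ smooth on $U^{(j)}$ and completes the argument.
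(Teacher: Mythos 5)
Your argument is correct, but it takes a genuinely different route from the paper's. You compute the Hamiltonian vector field of each eigenvalue function directly via the first-order perturbation (Hellmann--Feynman) formula, identify it pointwise with the generator $(B^{-1}E_{kk}B)_{\cc{O}_{\lambda}}$ of the $*$-action, and then assemble the torus action from the commuting periodic flows --- essentially the classical Guillemin--Sternberg/Thimm argument for collective integrability (cf.\ \cite{GS2}). The paper instead verifies the moment map identity $\iota(X_{new})\omega = d(\Lambda^{(j)})^X$ directly: it introduces the principal cross-section $N=(\Phi^{j})^{-1}(\sigma)$, uses the decomposition $T_A\cc{O}_{\lambda}=T_AN+T_A(U(j)\cdot A)$ from \cite{LMTW} to reduce to two cases ($Y$ tangent to $N$, where the new and standard actions literally coincide and $\Lambda^{(j)}=\Phi^{j}$; and $Y$ tangent to the $U(j)$-orbit, where both sides vanish), and then extends to all of $U^{(j)}$ by $U(j)$-equivariance. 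Your approach buys explicit formulas for the Hamiltonian vector fields and forces the repeated-eigenvalue degeneracy into the open, which you handle correctly via the spectral projector $Q_k$ (its smoothness does follow from the rigidity of the multiplicity pattern on $\sigma$); the paper's approach avoids perturbation theory entirely by only ever differentiating at points where $\Phi^{j}$ is already diagonal. One caveat you should state explicitly: when $|I_k|>1$ the generator of the flow of $\lambda^{(j)}_k$ is $\frac{1}{|I_k|}Q_k$, whose flow has period $|I_k|$ rather than $1$ under the paper's normalization of $\exp$, so the commuting circle flows glue to a torus whose lattice differs from that of $T^{j}$ in the degenerate directions. Since the corresponding components of $\Lambda^{(j)}$ coincide there and only the effective subtorus $T^D$ is used later, this does not affect the result (the paper likewise concedes the action is not effective), but it is the price of working with individual eigenvalues rather than with the cross-section.
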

\begin{proof}
To simplify the notation, we will denote $U^{(j)}$ simply by $U$.
Take any $X \in \lie{t}^{j}$ and denote by $X_{new}$ the vector field on $U$ generated by $X$ with $*$ action, 
and by $X_{std}$ the vector field on $U$ generated by $X$ using the standard action by conjugation.
As usual, for any function 
$\varphi: \cc{O}_{\lambda} \rightarrow \lie{u}(j)^*$, and any  $X \in \lie{u}(j)$, we denote by $\varphi^{X}$ a function from $\cc{O}_{\lambda}$ 
to $\bb{R}$ defined by $\varphi^{X}(p)=\langle \varphi(p), X \rangle$,
where $\langle, \rangle$ is the standard $U(j)$ invariant pairing between $\lie{u}(j)^*$ and $ \lie{u}(j)$.
Take any $A \in U$.
We want to prove that 
for any vector $Y \in T_A\cc{O}_{\lambda}=T_AU$
\begin{equation}\label{mmap}
 \omega(X_{new},Y)|_A=d\, (\Lambda^{(j)})^{X}\,(Y)|_A.
\end{equation}
Denote by $N$ the connected symplectic submanifold $N:=(\Phi^{j})^{-1}(\sigma) \subset \cc{O}_{\lambda}$,
where $\sigma$ is the principal face. We refer to $N$ as the \textbf{principal cross-section}. Note that $U=(\Lambda^{(j)})\inv(\sigma) =U(j) \cdot N$, and so every $A \in U$ 
can be $U(j)$ conjugated to an element of $N$.
We first prove equation (\ref{mmap}) for $A\in N$.\\
The proof of theorem 3.8 in \cite{LMTW} implies that
$$T_A \cc{O}_{\lambda}=T_A N \,+\, T_A(U(j) \cdot A).$$
This is not a direct sum. Thus to prove the equation (\ref{mmap}) for $A\in N$, 
it is enough to consider two cases: when vector $Y$ is tangent to the principal cross-section, and when it is tangent to $U(j)$ orbit (for the standard action).

Before we start considering the cases, we fix some notation.
For any vector field $V$ on $\cc{O}_{\lambda}$, denote by $\Psi^V$ its flow. 
Recall that $\Psi_{-t}^V=(\Psi_t^V)^{-1}$. Therefore, for example $\Psi_{t}^{X_{std}}(Q)=X_t QX_t^{-1}$ and 
$\Psi_{-t}^{X_{std}}(Q)=X_t^{-1} QX_t$.\\
\indent \textbf{Case 1}: Take $Y \in T_A N \subset T_A \cc{O}_{\lambda}.$
 We want to compute $\omega(X_{new},Y)|_A=\langle A, [X_{new}, Y] \rangle$.
Notice that on the principal cross section functions $\Phi^{j}$ and $\Lambda^{(j)}$ are equal, and the standard and the new actions of $T^{j}$ coincide.
Therefore the vector fields $X_{std}$ and $X_{new}$ have equal values and flows on $N$.
Using the formula
$$[X_{new},Y]=	\displaystyle\lim_{t\to 0}\frac{(\Psi^{X_{new}}_{-t})_*(Y)-Y}{t}=[X_{std},Y].$$
we have that, if $Y \in T_A N$, then $\langle A, [X_{new}, Y] \rangle=\langle A,[X_{std},Y] \rangle$.
The fact that functions $\Phi^{j}$ and $\Lambda^{(j)}$ agree on all of the $N$, means also that for $Y \in T_A N$
we have
$$ d(\Phi^{j})^X(Y)=d(\Lambda^{(j)})^X(Y).$$
Therefore \begin{eqnarray*}
\omega(X_{new},Y)|_A&=&\langle A, [X_{new}, Y] \rangle=\langle A, [X_{std}, Y] \rangle\\&=&\omega(X_{std},Y)|_A=d(\Phi^{j})^X(Y)|_A\\&=&d(\Lambda^{(j)})^X(Y)|_A.
          \end{eqnarray*}
\indent \textbf{Case 2}: Take $Y \in T_A(U(j) \cdot A)$. That is $Y=Y_{std}$ for some $Y=\frac{d}{dt}Y_t|_{t=0} \in \lie{u}(j)$ and the integral curve of $Y$ 
through $A$
is $\Psi^Y_t(A)= Y_t\, A \,Y_t^{-1}$.
As before, we start by analyzing $[X_{new},Y]$ at $A$. We have:
$$[X_{new},Y]|_A=\displaystyle\lim_{t\to 0}\frac{(\Psi^{X_{new}}_{-t})_*(Y)|_{\Psi^{X_{new}}_{t}(A)}-Y|_A}{t}.$$
The point $A$ is in $N$, so $\Psi^{X_{new}}_{t}(A)=X_t\cdot A= X_t\,A\, X_t^{-1}.$
Now we need to understand the expression:
$$(\Psi^{X_{new}}_{-t})_*(Y)|_{\Psi^{X_{new}}_{t}(A)}=\frac{d}{dv}\,\Psi^{X_{new}}_{-t}(Y_v\,\Psi^{X_{new}}_{t}(A)\,Y_v^{-1})\,|_{v=0}.$$
To compute the value of $\Psi^{X_{new}}_{-t}$ on $Y_v\,\Psi^{X_{new}}_{t}(A)\,Y_v^{-1}$, we need to find an element $C$ of $U(j)$ 
that would conjguate $\Phi^{j}(\Psi^{X_{new}}_{-t})$ to some element in $(\lie{t}^{j})^*_+$.
We have
\begin{displaymath}
 \begin{array}{cl}
 \Phi^{j}(Y_v\,\Psi^{X_{new}}_{t}(A)\,Y_v^{-1})&=\Phi^{j}(Y_v\,X_tA\, X_t^{-1}\,Y_v^{-1} )\\
 &=Y_v\,X_t\, \Phi^{j}(A)X_t^{-1}Y_v^{-1}.
 \end{array}
\end{displaymath}
Therefore, for 
$$C=X_t^{-1} \, Y_v^{-1}$$
we have that
$$C \Phi^{j}(Y_v\,\Psi^{X_{new}}_{t}(A)\,Y_v^{-1})\,C^{-1}=  \, \Phi^{j}(A)\, \in (\lie{t}^{j})^*_+ .$$
This means that the new action of $X_{t}$ at a point $Y_v\,\Psi^{X_{new}}_{t}(A)\,Y_v^{-1}$ is the same as standard action of 
$$C^{-1}\,X_{t}\,C= Y_v X_t\,X_{t}\,X_t^{-1} \,  Y_v^{-1}=Y_v  X_t\,  Y_v^{-1},$$
so 
\begin{displaymath}
 \begin{array}{cl}
  & \Psi^{X_{new}}_{-t}(Y_v\,\Psi^{X_{new}}_{t}(A)\,Y_v^{-1})\\
&=(Y_v X_t^{-1} \,  Y_v^{-1})(\,Y_v\, X_t \,A\, X_t^{-1} \,Y_v^{-1})( Y_v   X_t    Y_v^{-1})\\  
&=Y_v\,A\, Y_v^{-1}.
 \end{array}
\end{displaymath}  
Therefore
$$[X_{new},Y]|_A=\displaystyle\lim_{t\to 0}\frac{(\Psi^{X_{new}}_{-t})_*(Y)|_{\Psi^{X_{new}}_{t}(A)}-Y|_A}{t}=
\displaystyle\lim_{t\to 0}\frac{Y|_A-Y|_A}{t}=0,$$
and
$$\omega(X_{new},Y)|_A = \langle A, [X_{new}, Y] \rangle=0.$$
Notice that the function $\Lambda^{(j)}$ is constant on $U(j)$ orbits, because $\Phi^{j}$ is $U(j)$-equivariant and 
the whole $U(j)$ orbit intersects $(\lie{t}^{j})^*_+$ in a unique point. Thus, for $Y \in T_A(U(j)~\cdot~A)$,
$$d\, (\Lambda^{(j)})^{X}\,(Y)=0.$$
and equation (\ref{mmap}) for $A$ in $N$ follows.
\\ \indent
Now we want to prove equation (\ref{mmap}) for all $C \in U$. Let $B$ be an element of $U(j)$ such that $B C B\inv =A \in \lie{t}^*_+$.
Take any $X \in \lie{t}$ and $Y \in T_C U$.
Using the $U(j)$ invariance of $\omega$ and of $\Lambda^{(j)}$, and equation (\ref{mmap}) at the principal cross section, we have
\\
\begin{align*}
\omega ( X_{new},Y)_{|B\inv A B}&=\omega(\,\ \frac{d}{dt}(B\inv X_t B \cdot C)|_{t=0},\, \frac{d}{dt}(\Psi^Y_t(C))|_{t=0}\, \,) \\
&=\omega(\,\ \frac{d}{dt}B(B\inv X_t B \cdot C)B\inv|_{t=0},\, \frac{d}{dt}B(\Psi^Y_t(C))B\inv|_{t=0})\\
&=\omega(\,\ \frac{d}{dt}( X_t B B\inv A B B\inv X_t\inv)|_{t=0},\, \frac{d}{dt}(\Psi^{BYB\inv}_t(A))|_{t=0})\\
&=\omega(X_{new}, BYB\inv)|_{A}=d\,(\Lambda^{(j)})^X (BYB\inv)|_A\\
&=\frac{d}{dt}\,{[}\,(\Lambda^{(j)})^X(B\Psi^Y_t(C)B\inv))\,{]}|_{t=0}=\frac{d}{dt}\,{[}\,(\Lambda^{(j)})^X(\Psi^Y_t(C))\,{]}|_{t=0}\\
&=d\,(\Lambda^{(j)})^X (Y)|_C,
\end{align*} which is exactly what we needed to show.
\end{proof}
Putting this together for all $k$ gives us a Hamiltonian (although not necessarily effective) action of $T^N$ on the open dense subset, 
$$U:=\bigcap_jU^{(j)}.$$
We call a wall of $(\lie{t}^N)^*_+$ \textbf{special} if there is a $j$ such that the image of this wall under projection
$(\lie{t}^N)^* \rightarrow (\lie{t}^{j})^*$ is a special wall as defined in the Section \ref{smooth}. Other walls of $(\lie{t}^N)^*_+$
will be called \textbf{regular}.\\ \indent
Notice that the standard action of $T^n$, described in the Section \ref{standard action}, is a part of the $T^N$ action on $U$.
One can easily compute the $T^n$-moment map $\mu$, which mapps a matrix to its diagonal entries, from $\Lambda$.
Of course $\lambda^{(1)}_1(A)=a_{11}$. Using the fact that the trace of $\Phi^{2}(A)$ is $a_{11}+ a_{22}=\lambda^{(2)}_1(A)+\lambda^{(2)}_2(A)$
we compute the value $a_{22}$. Continuing this process we obtain all the diagonal entries of $A$, that is we obtain $\mu(A)$.
This defines the projection $pr: (\lie{t}^N)^* \rightarrow(\lie{t}^n)^*$, which on 
the image of $\Lambda$ is given by the following formula
$$ pr (\{\lambda^{(j)}_l\})=\Bigl( \lambda^{(1)}_1,\,(\lambda^{(2)}_1+\lambda^{(2)}_2-\lambda^{(1)}_1)\,,
 \ldots ,\, \sum_i \lambda^{(n-1)}_i\, -\sum_i \lambda^{(n-2)}_i, \sum_i \lambda^{(n)}_i\, -\sum_i \lambda^{(n-1)}_i \Bigr).$$
This means $\mu = pr \circ \Lambda $. 
Under this projection, the Gelfand-Tsetlin polytope $\cc{P}$, described below, maps to the moment map image, $\cc{Q}$, of the standard maximal torus action.
Here is an example for a generic $SU(3)$ coadjoint orbit, $\cc{O}_{\lambda}$.
$$
\begin{array}{cc}
 \includegraphics[width=.2\textwidth]{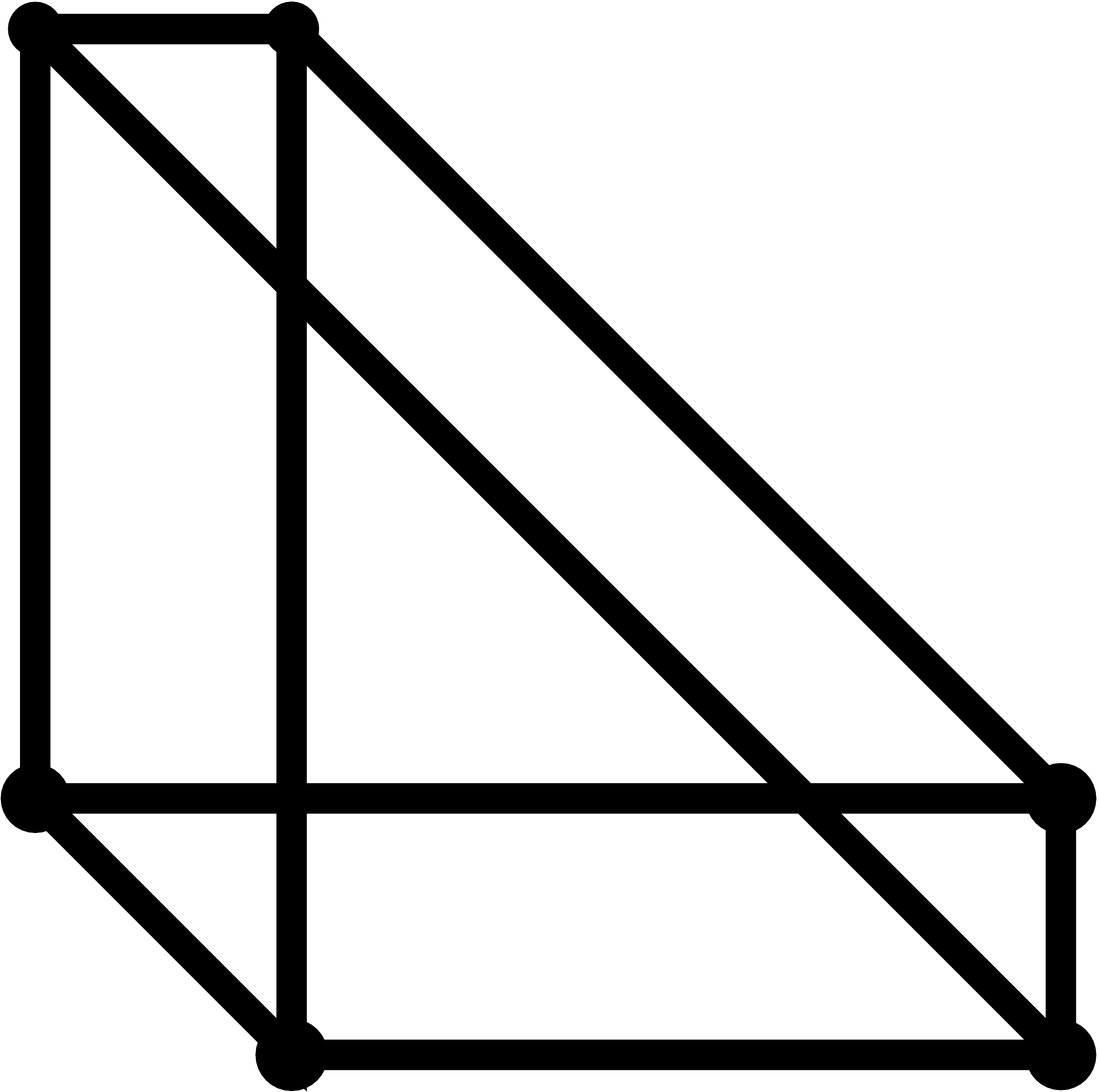}\,\, & \,\,
 \includegraphics[width=.2\textwidth]{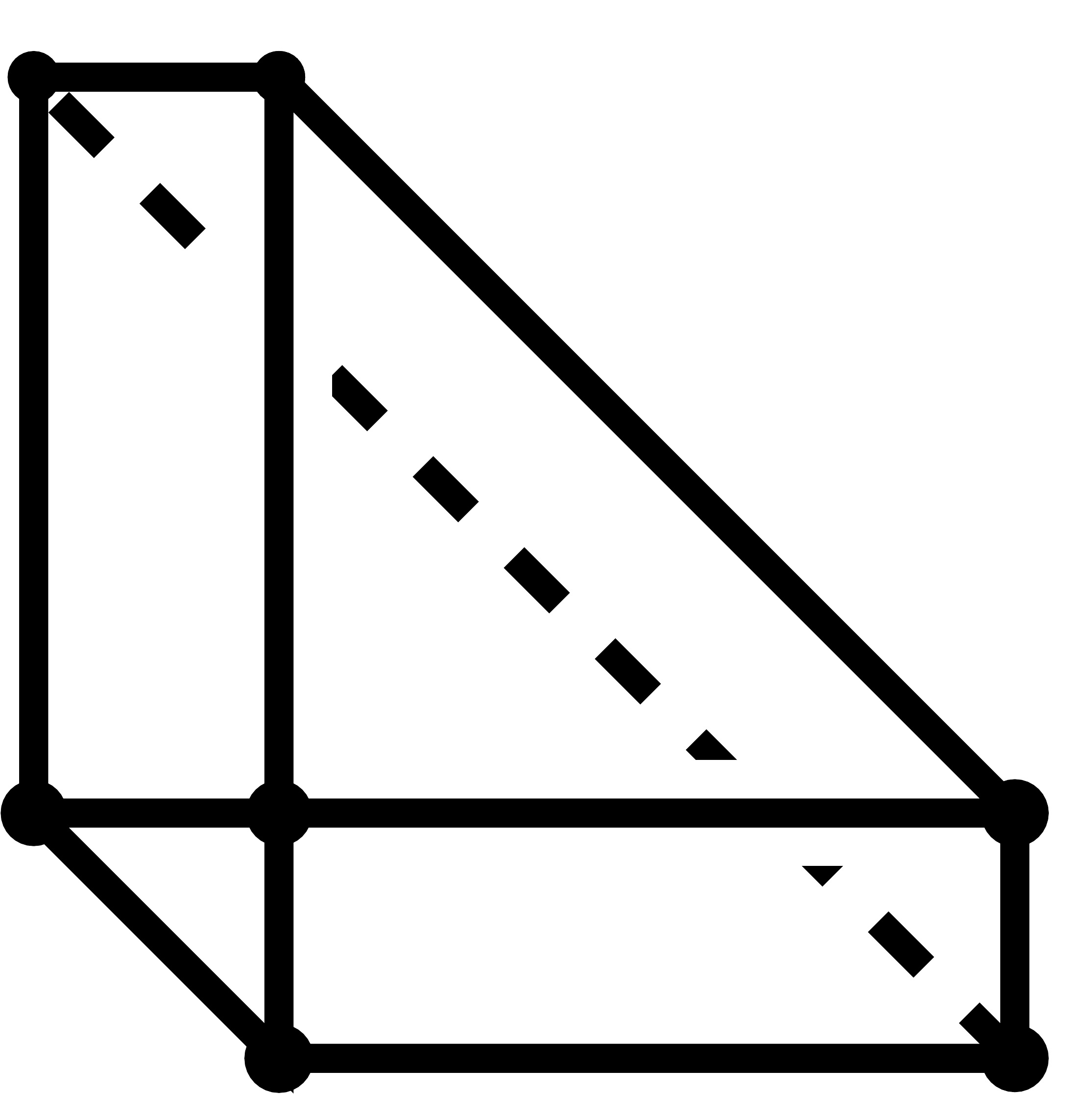}
\\
& \\
\cc{Q}=\mu(\cc{O}_{\lambda}) \in \bb{R}^2 & \cc{P}=\Lambda(\cc{O}_{\lambda}) \in \bb{R}^3
\end{array}
$$
\subsection{The Gelfand-Tsetlin polytope}
In this subsection we analyze the image $\Lambda(\cc{O}_{\lambda})$ in $\bb{R}^N$, where $N:=n(n-1)/2$.
The inequalities (\ref{ineq}) cut out a polytope in $\bb{R}^N$, which we denoted by $\cc{P}$, and  $\Lambda(\cc{O}_{\lambda})$ is contained in this polytope.
\begin{proposition}\label{gtpolytope}
 The image $\Lambda(\cc{O}_{\lambda})$ is exactly $\cc{P}$.
\end{proposition}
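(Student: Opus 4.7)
The plan is to prove the reverse inclusion $\mathcal{P} \subseteq \Lambda(\mathcal{O}_\lambda)$ (the inclusion $\Lambda(\mathcal{O}_\lambda)\subseteq\mathcal{P}$ is already guaranteed by the mini-max principle recalled in (\ref{ineq})) by induction on $n$. The base case $n=1$ is vacuous since $\mathcal{P}$ is then a single point. For the inductive step, fix a point $p \in \mathcal{P}$ with coordinates $\{\lambda^{(j)}_i : 1 \leq i \leq j \leq n-1\}$ together with the fixed top row $\lambda^{(n)}_i = \lambda_i$, all satisfying the interlacing inequalities. The truncated tuple $\{\lambda^{(j)}_i : 1 \leq i \leq j \leq n-2\}$ satisfies precisely the Gelfand--Tsetlin interlacing relative to the top row $(\lambda^{(n-1)}_1,\ldots,\lambda^{(n-1)}_{n-1})$, so by the inductive hypothesis applied to the $U(n-1)$ coadjoint orbit through $\mathrm{diag}(\lambda^{(n-1)}_1,\ldots,\lambda^{(n-1)}_{n-1})$, there exists an $(n-1)\times(n-1)$ Hermitian matrix $B$ realizing these Gelfand--Tsetlin values.

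The remaining task is the extension step: produce an $n\times n$ Hermitian matrix $A \in \mathcal{O}_\lambda$ whose top-left $(n-1)\times(n-1)$ block equals $B$. Since the Gelfand--Tsetlin functions $\lambda^{(j)}_i$ for $j\leq n-1$ depend only on this block, such an $A$ will satisfy $\Lambda(A)=p$. After conjugating by a block-diagonal unitary $\mathrm{diag}(U,1)$ we may assume $B = D := \mathrm{diag}(\lambda^{(n-1)}_1,\ldots,\lambda^{(n-1)}_{n-1})$, and we look for $A$ of the form
\[
A = \begin{pmatrix} D & v \\ v^* & c \end{pmatrix}, \qquad v \in \mathbb{C}^{n-1},\; c \in \mathbb{R}.
\]
Expanding along the last row, the characteristic polynomial is
\[
\det(xI - A) = (x-c)\prod_{i=1}^{n-1}(x - \lambda^{(n-1)}_i) - \sum_{k=1}^{n-1}|v_k|^2 \prod_{i\neq k}(x - \lambda^{(n-1)}_i),
\]
and we wish this to equal $\prod_{i=1}^n (x-\lambda_i)$. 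Matching top coefficients forces $c = \sum_i \lambda_i - \sum_i \lambda^{(n-1)}_i$, and, when the $\lambda^{(n-1)}_i$ are distinct, evaluating at $x=\lambda^{(n-1)}_k$ gives
\[
|v_k|^2 = -\frac{\prod_i (\lambda^{(n-1)}_k - \lambda_i)}{\prod_{i\neq k}(\lambda^{(n-1)}_k - \lambda^{(n-1)}_i)}.
\]
A direct sign count using $\lambda_k \geq \lambda^{(n-1)}_k \geq \lambda_{k+1}$ and $\lambda^{(n-1)}_1 > \cdots > \lambda^{(n-1)}_{n-1}$ shows that the right-hand side is nonnegative, so $v_k$ exists.

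The main obstacle is handling the degenerate case when some $\lambda^{(n-1)}_i$ coincide: the evaluation trick above becomes a Hermite interpolation and the formula for $|v_k|^2$ is indeterminate. I would circumvent this either by approximating $p$ by points in the interior of $\mathcal{P}$ having strictly interlacing second-to-top row, applying the nondegenerate construction, and passing to a limit (using that $\mathcal{O}_\lambda$ is compact and $\Lambda$ is continuous, so $\Lambda(\mathcal{O}_\lambda)$ is closed), or, alternatively, by invoking the classical Schur--Horn/Thompson theorem that characterizes the set $\Phi^{n-1}(\mathcal{O}_\lambda)\cap (\mathfrak{t}^{n-1})^*_+$ as precisely the interlacing simplex. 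Either way, once the extension step is established, combining it with the inductive hypothesis yields a matrix $A \in \mathcal{O}_\lambda$ with $\Lambda(A) = p$, completing the proof.
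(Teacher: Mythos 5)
Your argument is in essence the paper's own: both reduce the statement to the existence of a bordered (``arrow'') Hermitian matrix with a prescribed diagonal block and prescribed total spectrum, and then iterate this level by level --- the paper top--down, by repeatedly applying its Lemma \ref{open} (quoted from [NNU]/[GS2]) and conjugating to patch the blocks together, you bottom--up by induction on $n$. What you add is an actual proof of the bordering lemma in the nondegenerate case: the characteristic-polynomial expansion, the formula for $|v_k|^2$, and the sign count are all correct (you should also record the final interpolation step: once the leading coefficient, the trace, and the $n-1$ values at the points $x=\lambda^{(n-1)}_k$ agree, the two monic degree-$n$ polynomials coincide, since their difference has degree at most $n-2$ and $n-1$ distinct roots).

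One genuine caveat concerns your first proposed fix for the degenerate case. If $\lambda$ has an eigenvalue of multiplicity at least $3$, say $\lambda_j=\lambda_{j+1}=\lambda_{j+2}$, then the interlacing inequalities force $\lambda^{(n-1)}_j=\lambda^{(n-1)}_{j+1}$ at \emph{every} point of $\mathcal{P}$, so $p$ cannot be approximated by points of $\mathcal{P}$ whose second-to-top row is strictly interlacing; note that Proposition \ref{gtpolytope} is asserted for arbitrary $\lambda$, so this case cannot be ignored. To salvage the perturbation route you would have to perturb $\lambda$ itself, and then closedness of $\Lambda(\mathcal{O}_\lambda)$ for the fixed orbit no longer suffices; you would instead need closedness of the set of pairs $(\mathrm{spec}(A),\Lambda(A))$ as $A$ ranges over a large compact ball of Hermitian matrices, which is true but must be stated. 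Your second fallback --- citing the classical converse of the Cauchy interlacing theorem --- is precisely the paper's Lemma \ref{open} and closes the argument cleanly in all cases.
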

\begin{proof}
The Proposition follows from successive applications of the following lemma (Lemma 3.5 in \cite{NNU}, see also \cite{GS2}), as explained below.
\begin{lemma}\label{open}
 For any real numbers $a_1 \geq b_1 \geq a_2 \geq \ldots \geq a_k \geq b_k \geq a_{k+1}$ there exist $x_1, \ldots, x_k$ in $\bb{C}$ and $x_{k+1}$ in $\bb{R}$ such that
the Hermitian matrix  
\begin{displaymath}
A:=\left(\begin{array}{cccc}
    b_1&  & 0& \bar{x}_1\\
& \ddots & & \vdots\\
0& & b_k & \bar{x}_k\\
x_1& \hdots & x_k& x_{k+1}
    \end{array}
\right),
\end{displaymath}
has eigenvalues $a_1, \ldots , a_{k+_1}$.
\end{lemma}
Now let $c_1, \ldots, c_{k-1}$ be numbers such that
$b_1 \geq c_1 \geq b_2 \ldots \geq b_{k-1} \geq c_{k-1} \geq b_k.$
Applying Lemma \ref{open} again, we get that there exist $y_1, \ldots , y_{k-1}$ in $\bb{C}$ and $y_{k}$ in $\bb{R}$ such that
the Hermitian matrix
\begin{displaymath}
B:=\left(\begin{array}{cccc}
    c_1&  & 0& \bar{y}_1\\
& \ddots & & \vdots\\
0& & c_{k-1} & \bar{y}_{k-1}\\
y_1& \hdots & y_{k-1}& y_k
    \end{array}
\right),
\end{displaymath}
has eigenvalues $b_1, \ldots , b_{k}$.
Therefore there is an invertible matrix $C \in U(k)$ such that $CBC\inv=\diag (b_1, \ldots, b_k)$.
Denote by $X$ the column vector $(x_1, \ldots, x_k)^T$. Notice that
\scriptsize 
\begin{displaymath}
\left(\begin{array}{ccc|c}
    &  & & 0\\
& C & & \vdots\\
& &   & 0\\
\hline
0& \hdots & 0& 1
    \end{array}
\right)
\left(\begin{array}{ccc|c}
    &  & & \\
& B & & C\inv \overline{X}\\
 &  & \\
\hline
&X^T C & & x_{k+1}
    \end{array}
\right)
\left(\begin{array}{ccc|c}
    &  & & 0\\
& C\inv & & \vdots\\
& &   & 0\\
\hline
0& \hdots & 0& 1
    \end{array}
\right)=
\left(\begin{array}{ccc|c}
    &  & & \\
& CBC\inv & & C\,C\inv \overline{X}\\
 &  & \\
\hline
&X^T C\,C \inv & & x_{k+1}
    \end{array}
\right)=A
\end{displaymath}
\normalsize
Therefore the Hermitian matrix 
\begin{displaymath}
\left(\begin{array}{ccc|c}
    &  & & \\
& B & & C\inv \overline{X}\\
 &  & \\
\hline
&X^T C & & x_{k+1}
    \end{array}
\right)
\end{displaymath}
has desired values of the Gelfand-Tsetlin functions $\lambda^{(k+1)}_*,\lambda^{(k)}_*, \lambda^{(k-1)}_*$.
Continuing this process, we construct a matrix $A$ in $\cc{O}_{\lambda}$
such that $\Lambda(A)=L$, for any chosen point $L$ in the polytope $\cc{P}$.
\end{proof}
The polytope $\cc{P}\subset \bb{R}^N$ is called the \textbf{Gelfand-Tsetlin polytope}.
We think of $\bb{R}^N$ as having coordinates $\{x^{(j)}_{k}\}$, indexed by pairs $(j,k)$, for $j=1,\ldots,n-1$, and $k=1, \ldots, j$,
so that $x^{(j)}_{k}$-th coordinate of $\Lambda(A)$ is $\lambda^{(j)}_{k}(A)$.
\begin{lemma}\label{vertex}
Let $\Lambda=\Lambda(A)$, $A \in \cc{O}_{\lambda}$,
be a point in the polytope $\cc{P}$,
with coordinates $\{\lambda^{(j)}_{k}(A)\}$.
Suppose that for any $(j,k)$, $j=1,\ldots,n-1$, $k=1, \ldots, j$, we have that
$$ \lambda^{(j)}_{k}(A)=\lambda^{(j+1)}_{k}(A)\textrm{   or   }\lambda^{(j)}_{k}(A)=\lambda^{(j+1)}_{k+1}(A).$$
Then $\Lambda$ is a vertex of the polytope $\cc{P}$.
\end{lemma}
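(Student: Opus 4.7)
The plan is to exhibit $N$ linearly independent defining inequalities of $\cc{P}$ that are saturated at $\Lambda$ and whose unique common solution is $\Lambda$ itself, thereby certifying $\Lambda$ as a vertex. Recall that $\cc{P} \subset \bb{R}^N$ is cut out by the interlacing inequalities $A_{l,j}$ and $B_{l,j}$ of (\ref{ineq}), one pair per coordinate $x^{(j)}_k$ with $1 \le k \le j \le n-1$, while the top-row values $\lambda^{(n)}_m = \lambda_m$ are fixed constants of the orbit rather than coordinates of $\bb{R}^N$.

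For each pair $(j,k)$ I would pick one of the two saturated inequalities guaranteed by the hypothesis, producing the equation
$$E_{j,k}:\quad x^{(j)}_k = x^{(j+1)}_{i_{j,k}}, \qquad i_{j,k} \in \{k, k+1\},$$
where the right-hand side is a coordinate if $j < n-1$ and the constant $\lambda_{i_{n-1,k}}$ if $j = n-1$. There are exactly $N$ such equations, one per coordinate, all active at $\Lambda$.

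The crucial feature is that this system triangulates in the level index $j$. I would argue by downward induction starting from $j=n-1$: each $E_{n-1,k}$ reads $x^{(n-1)}_k = \lambda_{i_{n-1,k}}$ and determines $x^{(n-1)}_k$ outright; assuming every coordinate at level $j+1$ has already been pinned down, each $E_{j,k}$ sets $x^{(j)}_k$ equal to one of those known values. Equivalently, if one orders rows and columns of the coefficient matrix so that larger $j$ comes first, then row $E_{j,k}$ contributes a $+1$ on its diagonal entry and (possibly) a $-1$ strictly to the left, so the matrix is unit lower triangular and therefore invertible. Consequently the $N$ saturated equations are linearly independent with $\Lambda$ as their unique common solution, so $\Lambda$ is a vertex of $\cc{P}$.

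There is no genuine obstacle here; the only bookkeeping point is to treat the top level $j = n - 1$ separately as the base case of the induction, since there the right-hand sides are the fixed eigenvalues $\lambda_m$ rather than further unknowns, which is what gives the triangular system an honest starting point.
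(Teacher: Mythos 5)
Your proof is correct and follows essentially the same route as the paper: both select one saturated inequality $A_{j,k}$ or $B_{j,k}$ per coordinate and observe that the resulting system determines every $x^{(j)}_k$ by back-substitution from the top level down. The only cosmetic difference is the final certification step --- the paper sums the $N$ saturated inequalities into a single valid inequality $CX\leq Z$ and identifies $\cc{P}\cap\{CX=Z\}$ as a $0$-dimensional face, whereas you invoke the equivalent rank-$N$ active-constraint characterization of a vertex via the unit lower triangular coefficient matrix.
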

\begin{proof}
For any pair $(j,k)$ pick one equality, $A_{j,k}$ or $ B_{j,k}$, that is satisfied by $\Lambda$ (if both are satisfied pick either one of them).
Arrange these inequalities to be of the form: 
\begin{center}
(linear combination of variables $x^{(j)}_{k}$) $\leq$ real constant.
\end{center}
Sum all of these $N$ inequalities together, forming the inequality 
$$CX \leq Z,$$
where $X=(x^{(n-1)}_{1}, \ldots, x^{(1)}_{1}) \in \bb{R}^N$ is the variable, and $Z,C \in \bb{R}^N$ are constants.
Every $X \in \cc{P}$ has to satisfy $CX \leq Z$, as this is just a sum of $N$ of the $2N$ inequalties defining $\cc{P}$.
Therefore $\cc{P} \cap \{X;\, CX=Z\}$ is a face of $\cc{P}$,
(see Definition 2.1 in \cite{Zi}).
Note that $X \in \cc{P}$ satisfies $CX=Z$ if and only if all of the $N$ inequalities defining $\cc{P}$ we have summed, are equalities for $X$.
This determines the values of all $x^{(j)}_{k}$ in terms of $\lambda_1, \ldots, \lambda_n$.
Therefore 
$$\cc{P} \cap \{X;\, CX=Z\}=\{\Lambda(A)\}$$
is a $0$-dimensional face, in other words a vertex of $\cc{P}$.
\end{proof}
\noindent To emphasize the main idea of this proof, we give the following example.
\begin{example}
 Let $n=3$, $\lambda=(5,5,4)$ and $\Lambda(A)=(\lambda^{(2)}_{1}(A),\lambda^{(2)}_{2}(A),\lambda^{(1)}_{1}(A))=(5,4,5)$.
We need to choose inequalities $A_{j,k}, B_{j,k}$, one for each pair $(j,k)$, that are equalities for $\Lambda(A)$.
For $\lambda^{(2)}_{1}(A)$ we have a choice as both of them are equations. Say we pick $B_{2,1}$, $B_{2,2}$ and $A_{1,1}$.
The set of rearranged inequalities is
 \begin{eqnarray*}
  -x^{(2)}_{1}&\leq& -\lambda_2=-5\\
-x^{(2)}_{2}&\leq& -\lambda_3=-4\\
x^{(1)}_{1}-x^{(2)}_{1}& \leq& 0
 \end{eqnarray*}
Summing these inequalities together we obtain
$$ -2x^{(2)}_{1}-x^{(2)}_{2}+x^{(1)}_{1}\leq -9.$$
This inequality is satisfied on all $\cc{P}$. 
An element $X \in \cc{P}$ satisfies $ -2x^{(2)}_{1}-x^{(2)}_{2}+x^{(1)}_{1}=-9$ if and only if
\begin{eqnarray*}
  -x^{(2)}_{1}&=&-5\\
-x^{(2)}_{2}&=&-4\\
x^{(1)}_{1}&=&x^{(2)}_{1}.
 \end{eqnarray*}
Thus, we see that $(5,4,5)$ is the unique solution to these inequalities in $\cc{P}$.
\end{example}
\begin{lemma}\label{fixedptvertex} The map $\Lambda$ sends every $T^n$=fixed point to a vertex of $\cc{P}$.
\end{lemma}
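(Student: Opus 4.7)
The plan is to verify the hypothesis of Lemma \ref{vertex} for any $T^n$-fixed point, which then immediately implies the conclusion.

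Recall from Section \ref{standard action} that the $T^n$-fixed points of $\cc{O}_\lambda$ are precisely the diagonal matrices whose entries $(a_1,\ldots,a_n)$ are a permutation of $(\lambda_1,\ldots,\lambda_n)$. For such a fixed point $A$, the submatrix $\Phi^{j}(A)$ is the diagonal matrix $\diag(a_1,\ldots,a_j)$, so the multiset of eigenvalues of $\Phi^{j}(A)$ is $\{a_1,\ldots,a_j\}$, and $(\lambda^{(j)}_1(A),\ldots,\lambda^{(j)}_{j}(A))$ is this multiset arranged in non-increasing order.

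The key observation is then the following: the multiset of eigenvalues of $\Phi^{j+1}(A)$ is obtained from the multiset of eigenvalues of $\Phi^{j}(A)$ by inserting the single value $a_{j+1}$. Concretely, if $a_{j+1}$ is inserted into the sorted sequence $\lambda^{(j)}_1(A)\geq\cdots\geq\lambda^{(j)}_j(A)$ at position $p$ (so that $\lambda^{(j)}_{p-1}(A)\geq a_{j+1}\geq \lambda^{(j)}_p(A)$, with the conventions $\lambda^{(j)}_0=+\infty$ and $\lambda^{(j)}_{j+1}=-\infty$), then
\[
\lambda^{(j+1)}_i(A) = \lambda^{(j)}_i(A) \text{ for } i<p, \quad \lambda^{(j+1)}_p(A)=a_{j+1}, \quad \lambda^{(j+1)}_{i+1}(A)=\lambda^{(j)}_i(A) \text{ for } i\geq p.
\]
Therefore, for every $k$ with $1\le k\le j$, either $k<p$ and $\lambda^{(j)}_k(A)=\lambda^{(j+1)}_k(A)$, or $k\ge p$ and $\lambda^{(j)}_k(A)=\lambda^{(j+1)}_{k+1}(A)$.

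This is exactly the hypothesis of Lemma \ref{vertex}, which then lets us conclude that $\Lambda(A)$ is a vertex of $\cc{P}$. The argument is essentially bookkeeping once one recognizes the interlacing behavior of the eigenvalues of nested principal submatrices of a diagonal matrix, so there is no serious obstacle; the only small care needed is the correct indexing of the insertion position $p$ to cover both the $A_{j,k}$ and $B_{j,k}$ alternatives in Lemma \ref{vertex}.
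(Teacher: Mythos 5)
Your proof is correct and follows essentially the same route as the paper: both arguments observe that the eigenvalue multiset of $\Phi^{j+1}(A)$ is obtained from that of $\Phi^{j}(A)$ by inserting one diagonal entry, track the insertion position in the sorted sequence to show each $\lambda^{(j)}_k(A)$ equals $\lambda^{(j+1)}_k(A)$ or $\lambda^{(j+1)}_{k+1}(A)$, and then invoke Lemma \ref{vertex}. Your indexing via the insertion position $p$ matches the paper's choice of $s$ (with $p=s+1$), so there is nothing to flag.
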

\begin{proof}
 For a diagonal matrix $F=\diag(F_{1,1}, \ldots, F_{n,n})$, the set of eigenvalues of $F_{j+1}:=\Phi^{j+1}(F)$ is obtained from 
the set of eigenvalues of $F_{j}:=\Phi^{j}(F)$ by adding $F_{j+1,j+1}$. Let $s$ be such that
$$ \lambda^{(j)}_{s}(F)\geq F_{j+1,j+1}>\lambda^{(j)}_{s+1}(F).$$
Then
\begin{eqnarray*}
  \forall_{l\leq s} & \lambda^{(j)}_{l}(F) & =\lambda^{(j+1)}_{l}(F)\\
 \forall_{l > s} & \lambda^{(j)}_{l}(F) & =\lambda^{(j+1)}_{l+1}(F).
 \end{eqnarray*}
Therefore $\Lambda(F)$ is a vertex of $\cc{P}$, by Lemma \ref{vertex}.
\end{proof}

\begin{lemma}\label{edge}
Let $\Lambda=\Lambda(A)$, for $A \in \cc{O}_{\lambda}$,
be a point in the polytope $\cc{P}$,
with coordinates $\{\lambda^{(j)}_{k}(A)\}$.
Suppose that 
there exists exactly one pair of indices $(j_0,k_0)$ such that both inequalities $A_{j_0,k_0}$ and $ B_{j_0,k_0}$ at the point $A$ are strict.
That is, for all $(j,k)\neq (j_0.k_0)$, $j=1,\ldots,n-1$, $k=1, \ldots, j$, we have one of the equalities
$$ \lambda^{(j)}_{k}(A)=\lambda^{(j+1)}_{k}(A)\textrm{   or   }\lambda^{(j)}_{k}(A)=\lambda^{(j+1)}_{k+1}(A).$$
Then $\Lambda(A)$ is contained in the interior of an edge of $\cc{P}$.
\end{lemma}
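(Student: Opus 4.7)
The plan is to mimic the proof of Lemma \ref{vertex}, but to sum only $N-1$ inequalities instead of $N$, so that the resulting face has dimension one rather than zero. For every pair $(j,k)\neq (j_0,k_0)$ with $j=1,\ldots,n-1$, $k=1,\ldots,j$, I would pick an equality among $A_{j,k}, B_{j,k}$ that is satisfied at $A$ (such a choice exists by hypothesis, and is arbitrary when both hold). Rearrange each chosen equality into the form ``linear combination of the $x^{(j)}_{k}$ $\leq$ constant'', and sum the resulting $N-1$ inequalities to obtain a single inequality $CX\leq Z$ on $\bb{R}^N$. Every $X\in\cc{P}$ satisfies this inequality, so $F:=\cc{P}\cap\{X\,:\,CX=Z\}$ is a face of $\cc{P}$ in the sense of Definition 2.1 in \cite{Zi}, and $X\in F$ if and only if each of the $N-1$ chosen inequalities is satisfied with equality at $X$.

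Next I would show that $F$ is one-dimensional. The argument in Lemma \ref{vertex} shows that if one adjoins any equality at $(j_0,k_0)$, the resulting $N$ linear equations determine all coordinates $x^{(j)}_{k}$ uniquely in terms of $\lambda_1,\ldots,\lambda_n$. Dropping that equation therefore leaves exactly one degree of freedom; concretely, propagating the chosen equalities downward from the top level (where $x^{(n)}_{l}=\lambda_{l}$), each variable $x^{(j)}_{k}$ is forced to equal either $x^{(j+1)}_{k}$ or $x^{(j+1)}_{k+1}$, except that $x^{(j_0)}_{k_0}$ remains a free parameter and each $x^{(j)}_{k}$ with $j<j_0$ that ends up tied to $x^{(j_0)}_{k_0}$ through the chain of chosen equalities inherits it. Hence $F$ is parametrized by the single real coordinate $x^{(j_0)}_{k_0}$, and the set of admissible values for this parameter is a closed interval, because the two remaining inequalities $A_{j_0,k_0}$ and $B_{j_0,k_0}$ reduce, after eliminating the other variables, to lower and upper bounds on $x^{(j_0)}_{k_0}$.

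Finally, I would check that $\Lambda(A)$ lies in the relative interior of $F$. The value $x^{(j_0)}_{k_0}(A)=\lambda^{(j_0)}_{k_0}(A)$ satisfies both $A_{j_0,k_0}$ and $B_{j_0,k_0}$ strictly by hypothesis, so it is strictly between the two bounds that define the interval, i.e., not at either endpoint. Thus $\Lambda(A)$ is in the interior of an edge of $\cc{P}$, as claimed.

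The main obstacle is the bookkeeping behind the ``one degree of freedom'' claim: one must verify that the $N-1$ chosen linear forms are linearly independent and that the unique free parameter in the resulting solution is precisely $x^{(j_0)}_{k_0}$. The key combinatorial point is that each equality of type $A_{j,k}$ or $B_{j,k}$ couples exactly one variable at level $j$ with one variable at level $j+1$; working downward from the fixed top-level values $\lambda_{1},\ldots,\lambda_{n}$, this triangular pattern forces every coordinate except $x^{(j_0)}_{k_0}$ to be determined, so the system has rank $N-1$ and $F$ is genuinely an edge rather than a higher-dimensional face.
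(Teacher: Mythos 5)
Your proposal is correct and follows essentially the same route as the paper: choose one tight inequality for each pair $(j,k)\neq(j_0,k_0)$, sum the resulting $N-1$ inequalities to exhibit a face $\cc{P}\cap\{CX=Z\}$, observe that this face is parametrized by the single free coordinate $x^{(j_0)}_{k_0}$ ranging over an interval, and conclude from the strictness of $A_{j_0,k_0}$ and $B_{j_0,k_0}$ at $A$ that $\Lambda(A)$ lies in the relative interior. Your explicit remark that coordinates at levels below $j_0$ may be tied to the free parameter through the chain of chosen equalities is a point the paper passes over more quickly, but otherwise the two arguments coincide.
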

\begin{proof}
 Proceed similarly as in the proof of Lemma \ref{vertex}.
For any $(j,k)\neq (j_0.k_0)$ choose one of the inequalities $A_{j,k}, B_{j,k}$ that is equality for $\Lambda(A)$.
Arrange these inequalities to be of the form: 
\begin{center}
(linear combination of variables $x^{(j)}_{k}$) $\leq$ real constant.
\end{center}
Sum all of these $N-1$ inequalities together forming the inequality 
$$CX \leq Z.$$
As before, this gives an inequality valid for $\cc{P}$, and 
$\cc{P} \cap \{X;\, CX=Z\}$ is a face of $\cc{P}$.
The equation $CX=Z$ determines the values of all
$x^{(j)}_{k}$, with $(j,k)\neq (j_0,k_0)$, in terms of $\lambda_1, \ldots, \lambda_n$ and $x^{(j_0)}_{k_0}$.
These uniquely determined values are $x^{(j)}_{k}=\lambda^{(j)}_{k}(A)$.
For any assignement of the value for $x^{(j_0)}_{k_0}$, the equation $CX=Z$ will still hold.
In order to have $X\in \cc{P}$ we need to pick the value for $x^{(j_0)}_{k_0}$
in the open interval $(x^{(j_0+1)}_{k_0},x^{(j_0+1)}_{k_0+1})=(\lambda^{(j_0+1)}_{k_0}(A),\lambda^{(j_0+1)}_{k_0+1}(A))$.
Note that $\lambda^{(j_0+1)}_{k_0}(A)\neq \lambda^{(j_0+1)}_{k_0+1}(A)$ because if they were equal, then they would also be equal to
$\lambda^{(j_0)}_{k_0}(A)$ what contradicts our assumptions. Thus we really are choosing the value for 
$x^{(j_0)}_{k_0}$
from the open, non-degenerate interval $(\lambda^{(j_0+1)}_{k_0}(A),\lambda^{(j_0+1)}_{k_0+1}(A))$.
Therefore 
$$\cc{P} \cap \{X;\, CX=Z\}\cong (\lambda^{(j_0+1)}_{k_0}(A),\lambda^{(j_0+1)}_{k_0+1}(A))$$
is a $1$-dimensional face of $\cc{P}$.
\end{proof}
\begin{proposition}
 For any $\lambda$, the dimension of the polytope $\cc{P}$ is half of the dimension of $\cc{O}_{\lambda}$.
\end{proposition}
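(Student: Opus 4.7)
The plan is to show directly that $\dim \cc{P} = D$, where $D = \sum_{i<j} l_i l_j = \frac{1}{2} \dim \cc{O}_\lambda$ was computed in Section \ref{standard action}. Since $\cc{P} \subset \bb{R}^N$ with $N = n(n-1)/2$, in the generic case (all $l_i = 1$) we have $D = N$ and there is nothing to show beyond checking $\cc{P}$ has nonempty interior. The real work is to account for the drop in dimension when some $\lambda_i$'s coincide.

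First, I would identify which Gelfand–Tsetlin coordinates $\lambda^{(j)}_k$ are forced to be constant on all of $\cc{P}$ by the defining inequalities $A_{l,j}$ and $B_{l,j}$. The argument is by downward induction on the row index $j$, starting from the fixed top row $\lambda^{(n)}_k = \lambda_k$. The key observation: if two consecutive entries in row $j+1$ are equal to some constant $c$, say $\lambda^{(j+1)}_k = \lambda^{(j+1)}_{k+1} = c$, then the interlacing $\lambda^{(j+1)}_k \geq \lambda^{(j)}_k \geq \lambda^{(j+1)}_{k+1}$ squeezes $\lambda^{(j)}_k$ to equal $c$. Applying this inductively to a block of $l_i$ equal consecutive top eigenvalues, one sees that a block of $l_i$ forced equal entries in row $n$ propagates to a block of $l_i - k$ forced equal entries in row $n-k$, for $k = 1, \ldots, l_i - 1$. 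Summing the contributions of this block across all rows gives $(l_i - 1) + (l_i - 2) + \cdots + 1 = \binom{l_i}{2}$ forced coordinates. Different blocks propagate independently, so the total number of forced coordinates is $\sum_i \binom{l_i}{2}$.

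Next, I would show that the remaining $N - \sum_i \binom{l_i}{2}$ coordinates vary freely on $\cc{P}$. For this I appeal to Proposition \ref{gtpolytope}: for any assignment of values to the non-forced coordinates satisfying strict interlacing inequalities (with forced coordinates set to their required values), the resulting triangular array lies in $\cc{P}$ because it satisfies all the defining inequalities of $\cc{P}$. Thus $\cc{P}$ contains an open subset of the affine subspace of $\bb{R}^N$ cut out by the forced equalities, which shows $\dim \cc{P} \geq N - \sum_i \binom{l_i}{2}$, and the matching upper bound from step one gives equality.

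Finally I would do the arithmetic: using $\sum_i l_i = n$,
\begin{align*}
N - \sum_i \binom{l_i}{2} &= \frac{n(n-1)}{2} - \sum_i \frac{l_i(l_i - 1)}{2} \\
&= \frac{n^2 - \sum_i l_i^2}{2} = \sum_{i<j} l_i l_j = D,
\end{align*}
so $\dim \cc{P} = D = \frac{1}{2} \dim \cc{O}_\lambda$. The main obstacle I anticipate is being careful in the inductive step that propagates block structure down through the rows: one must verify that no additional equalities are forced beyond those explained by the top-row blocks (which amounts to checking that, after fixing the forced entries, the remaining interlacing inequalities are strictly satisfiable), and that the contributions from different top-row blocks do not overlap. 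Both are immediate once one draws the Gelfand–Tsetlin triangle, but deserve an explicit verification.
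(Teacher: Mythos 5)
Your proof is correct and follows essentially the same route as the paper's: both arguments count the Gelfand--Tsetlin coordinates forced to be constant by the interlacing inequalities (getting $\sum_i l_i(l_i-1)/2$ from the blocks of repeated eigenvalues), invoke Proposition \ref{gtpolytope} to see that the remaining coordinates vary freely, and conclude with the identity $\frac{n(n-1)}{2}-\sum_i\frac{l_i(l_i-1)}{2}=\sum_{i<j}l_il_j=D$. Your write-up is somewhat more explicit than the paper's about the downward induction and about verifying that the non-forced coordinates genuinely give an open set, but the underlying argument is the same.
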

\begin{proof}
Fix $\lambda \in (\lie{t}^n)^*_+$, not necessarily generic.
Let $l_1, \ldots, l_s$ be the integers such that $l_1+ \ldots +l_s=n$ and
$$\lambda_1=\ldots =\lambda_{l_1} >\lambda_{l_1+1}=\ldots= \lambda_{l_1+l_2}> \ldots > \lambda_{n-l_s+1}= \ldots= \lambda_n .$$
Consider the coadjoint orbit $M:=\mathcal{O}_{\lambda}$ in $U(n)$.
The dimension of $\mathcal{O}_{\lambda}$ was already computed in Section \ref{standard action} and is equal to
$$2D:=2\,[\,l_1(l_2 + \ldots l_s) + l_2 (l_3 + \ldots +l_s) + \ldots + l_{s-1}l_s\,]= 2\,\sum_{i<j}l_il_j.$$
If some $l_j > 1$, then the $(l_j-1)$ functions $\lambda^{(1)}_{l_1+\ldots+l_{j-1}+1}=\ldots =\lambda^{(1)}_{l_1+\ldots+l_{j}-1}$
have to be equal to $\lambda_{l_1+\ldots+l_{j-1}+1}$ due to inequalities (\ref{ineq}).
Lemma \ref{open} implies that the image $\Lambda^{(1)}(\cc{O}_{\lambda})$ in $(\lie{t}^{n-1})^*\cong \bb{R}^{n-1}$
has dimension equal to the number of non-constant functions from $\lambda^{(1)}_{*}$
that is $$n-1-\sum_{j=1}^s(l_j-1).$$
Inequalities (\ref{ineq})
force also $(l_j-2)$ of functions $\lambda^{(2)}_*$ to be equal to $\lambda_{l_1+\ldots+l_{j-1}+1}$, as well as $l_j-3$ of functions $\lambda^{(3)}_*$, etc.
The number of our functions $\lambda^*_*$ that are constant is
$$\frac{l_1(l_1-1)}{2}+ \ldots +\frac{l_s(l_s-1)}{2}.$$
The remaining functions form the system of action coordinates, consisting of
$$ \frac{n(n-1)}{2} -\left( \frac{l_1(l_1-1)}{2}+ \ldots +\frac{l_s(l_s-1)}{2} \right) =\sum_{i<j}l_il_j=D$$
independent functions (see Proposition \ref{gtpolytope} and its proof). Therefore
the dimension of the image $\Lambda(\cc{O}_{\lambda})$ is $D$.
\end{proof}
For non-generic orbits, have $D \neq N$ and $T^N$ action is not effective. 
Let $\bb{R}^D$ be the smallest subspace of $\bb{R}^N\cong (\lie{t}^N)^*$ containing the 
polytope $\cc{P}$,
and let $T^D \hookrightarrow T^N$ be the corresponding subtorus of $T^N$.
Then the action of $T^D$ is effective and Hamiltonian on $U=\bigcap_j U^{(j)}=\bigcap_j \,(\Lambda^{(j)})\inv(\sigma_j)$.\\
\indent If $\cc{F}$ is a face of $\cc{P}$ containing some $x\in \Lambda(U)$, then, by the definition of $U$, $x$ is not on any regular wall.
Therefore any point of the interior $\cc{F}$ also cannot be on any regular wall, so it is in $U$.
\begin{lemma}\label{fixednotreg}
 If $\lambda$ is generic, then the images of fixed points of standard $T^n$ action are in $U$.
If $\lambda$ is non generic but there is only one eigenvalue that is repeated - then there is a $T^n$-fixed point that is in $U$.
\end{lemma}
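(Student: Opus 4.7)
The plan is to translate the condition ``$F\in U$'' into a simple combinatorial condition on $F$, classify the special/regular walls explicitly in terms of $\lambda$, and then exhibit (or check) a $T^n$-fixed $F$ whose Gelfand--Tsetlin pattern avoids every regular wall. Since $U=\bigcap_j(\Lambda^{(j)})^{-1}(\sigma_j)$, the point $F$ lies in $U$ iff for every $j$, $\Lambda^{(j)}(F)$ satisfies every special wall equation of $(\lie{t}^j)^*_+$ (which it does automatically, since these hold identically on $\Lambda(\cc{O}_{\lambda})$) and fails every regular one. Walls of $(\lie{t}^j)^*_+$ have the form $\lambda^{(j)}_k=\lambda^{(j)}_{k+1}$, so it suffices to determine which such equations are forced on all of $\Lambda(\cc{O}_{\lambda})$ and to choose $F$ so that the remaining equations are strict at $F$.

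First I would identify the forced (special) equalities. Iterating the interlacing inequalities (\ref{ineq}) gives $\lambda_k\geq\lambda^{(j)}_k\geq\lambda_{k+n-j}$, and Proposition \ref{gtpolytope} together with Lemma \ref{open} says these bounds are sharp. Hence $\lambda^{(j)}_k=\lambda^{(j)}_{k+1}$ is a special equation iff both $\lambda^{(j)}_k$ and $\lambda^{(j)}_{k+1}$ are pinned to the same value, which happens exactly when $\lambda_k=\lambda_{k+1}=\cdots=\lambda_{k+1+n-j}$, i.e.\ the indices $k,k+1,\dots,k+1+n-j$ all sit in a single block of equal $\lambda_i$'s.

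For generic $\lambda$ no such equation is forced. For any $T^n$-fixed $F$, the submatrix $\Phi^j(F)$ is diagonal with $j$ entries drawn from the distinct values $\lambda_1,\dots,\lambda_n$, so its sorted eigenvalues $\lambda^{(j)}_1(F)>\cdots>\lambda^{(j)}_j(F)$ are strictly decreasing. Thus $\Lambda^{(j)}(F)$ lies in the open Weyl chamber $\subset\sigma_j$ for every $j$, proving the first claim.

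For the non-generic case with a single repeated block $\lambda_l=\lambda_{l+1}=\cdots=\lambda_{l+s}=:v$, the analysis above shows the forced equalities are exactly the pairs $(j,k)$ with $j\geq n-s+1$ and $l\leq k\leq l+s-n+j-1$. The plan is to exhibit the $T^n$-fixed point $F$ obtained by placing the $n-s-1$ non-$v$ eigenvalues on the first $n-s-1$ diagonal positions (in decreasing order) and the $s+1$ copies of $v$ in the last $s+1$ positions. A direct calculation then shows: for $j\leq n-s-1$ the diagonal of $\Phi^j(F)$ consists of $j$ distinct values, so no equality arises; for $j\geq n-s$, exactly $j-(n-s-1)$ copies of $v$ appear, and after sorting they occupy positions $k=l,l+1,\dots,l+(j-n+s)$, producing equalities at $k=l,\dots,l+j-n+s-1$. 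This set coincides with the forced set computed above, so $\Lambda^{(j)}(F)\in\sigma_j$ for every $j$, and $F\in U$. The main obstacle is the combinatorial bookkeeping in the last step: one must check carefully that the set of equalities produced at $F$ by the sorted spectrum of $\Phi^j(F)$ agrees on the nose with the set of forced pairs $(j,k)$, including the transitional cases $j=n-s$ (single copy of $v$, no equality and no forced equality) and $j=n$ (all $s+1$ copies of $v$, matching the full forced range).
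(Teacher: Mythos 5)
Your proof is correct and follows essentially the same route as the paper's: reduce membership in $U$ to the condition that the only equalities in the Gelfand--Tsetlin pattern of $F$ are those forced on the whole orbit, and then exhibit a $T^n$-fixed point whose pattern realizes exactly the forced set. Your choice of fixed point (all $s+1$ copies of $v$ in the last diagonal positions) differs only immaterially from the paper's (one copy inside the top $(n-s)\times(n-s)$ block and the remaining $s$ at the end), and your interlacing-based classification of the forced equalities just makes explicit what the paper reads off from its displayed pattern.
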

\begin{proof}
If $\lambda$ is generic, then for any $T^n$-fixed point $F$ and any $k$, the matrix $\Phi^{j}(F)$ is a diagonal matrix with all diagonal entries distinct. Therefore 
$\Lambda(F)$ is not on any regular wall, so it is in $U$.\\
Now assume that $\lambda$ is of the form
$$\lambda_1 > \lambda_2> \ldots > \lambda_{l_1}=\lambda_{l_1+1}= \ldots = \lambda_{l_1+s}>\lambda_{l_1+s+1}> \ldots > \lambda_n.$$
Let $\{v_1>v_2>\ldots >v_{n-s}\}=\{\lambda_1 >\lambda_2> \ldots > \lambda_l>\lambda_{l_1+s+1}> \ldots >\lambda_n\}$
be the set of distinct eigenvalues.
Consider the $T^n$-fixed point
\begin{displaymath}
 F=\left(\begin{array}{c|c}
    A& 0\\
\hline
0&\lambda_{l_1} \textrm{Id}_{s}
    \end{array}
\right)
\end{displaymath}
where $A$ is any diagonal $(n-s) \times (n-s)$ matrix with spectrum $\{v_1,v_2,\ldots ,v_{n-s}\}.$
The figure below presents the values of Gelfand-Tsetlin functions $\lambda^{(j)}_{k}$ at $F$, for $j \geq n-s$ 
For $j \leq n- s$ the values $\lambda^{(j)}_{1}(F), \ldots , \lambda^{(j)}_{j}(F)$ are all distinct.
\scriptsize
\begin{displaymath}
 \begin{array}{cccccccccccccccccccccccc}
  v_1 &     &\ldots &  & v_{l_1-1} &  & v_l  &  &  & \ldots &  &  &  v_{l_1} &  &   v_{l_1+1} &  & \ldots &  &   v_{n-s} &\\

× & ×v_1 &       &\ldots &  & v_{l_1-1} &  & v_l  &  & \ldots  &   &   v_{l_1} &  &   v_{l_1+1} &  & \ldots &  &   v_{n-s} &  &\\ 

   &  &   \ddots &  &    \ddots  & &   &    & &  \vdots  &  &  & &  &  \begin{rotate}{70} $\ddots$ \end{rotate} & &&  &  & \\

 × & × &  &  v_{l_1} &       &\ldots  &  & v_{l_1-1} &  &v_{l_1} &  & v_{l_1+1} &    &\ldots&   &   v_{n-s}  &    & \\

\end{array}
\end{displaymath}
\normalsize
\\
Therefore $\lambda^{(k)}_{j}=\lambda^{(k)}_{j+1}$ at $F$ if and only if this equation is valid for the whole orbit.
This shows that the fixed point $F$ of the form described above is in the set $U$.
\end{proof}
We call $\Lambda$ images of such $T^n$-fixed points \textbf{good vertices} of $\cc{P}$. \\ \indent
Consider for example the non-generic $\lambda=(5,4,4,4,3,1).$
Here is the $T^n$-fixed point and its Gelfand-Tsetlin functions (the bold ones are constant on the whole orbit)
\small
\begin{displaymath}
 F=\left(\begin{array}{ccc|ccc}
  1&&  &&&\\
&5&&&&\\
&&3  &&&\\

\hline
&&& 4&&  \\
&&&&4&\\
&&&&&4  \\
    \end{array}
\right),\,\,\,
\begin{array}{ccccccccc}
  5&&\textbf{4}&&\textbf{4}&&3&&1\\
&5&&\textbf{4}&&3&&1&\\
&&5&&3&&1  &&\\
&&&5&&1&&&\\
&&&&1&&&&
    \end{array}\\
\end{displaymath}
\normalsize
Take any good vertex $V_F=\Lambda(F)$.
\begin{proposition}\label{Dedges}
There are exactly $D$ edges in $\cc{P}$ emanating from $\Lambda(F)$.
\end{proposition}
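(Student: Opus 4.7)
The plan is to apply Lemma \ref{edge}: an edge of $\cc{P}$ emanating from $V_F = \Lambda(F)$ is determined by a pair $(j_0, k_0)$ admitting a nearby point of $\cc{O}_\lambda$ at which both $A_{j_0, k_0}$ and $B_{j_0, k_0}$ are strict, while every other pair satisfies one of its two equalities. First I would classify the pairs at $V_F$: call $(j_0, k_0)$ a \emph{pinch} if $\lambda^{(j_0+1)}_{k_0}(F) = \lambda^{(j_0+1)}_{k_0+1}(F)$ (equivalently, if both $A_{j_0, k_0}$ and $B_{j_0, k_0}$ hold as equalities at $V_F$), and a \emph{non-pinch} otherwise. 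By iteratively applying (\ref{ineq}) to propagate interlacing from row $n$ upward, one sees that $\lambda^{(j+1)}_k(A) = \lambda^{(j+1)}_{k+1}(A)$ on all of $\cc{O}_\lambda$ precisely when $\lambda_k = \lambda_{k+1} = \cdots = \lambda_{k + n - j}$; at the good $F$ of Lemma \ref{fixednotreg} these forced orbit-wide equalities coincide exactly with the pinches, all produced by the single repeated eigenvalue of multiplicity $s+1$.

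The pinches contribute no edges: at a pinch $(j_0, k_0)$, interlacing squeezes $x^{(j_0)}_{k_0}(A)$ between two equal coordinates on the entire orbit, so no point of $\cc{O}_\lambda$ can have both $A_{j_0, k_0}$ and $B_{j_0, k_0}$ strict, and Lemma \ref{edge} produces nothing from such a pair.

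For each non-pinch $(j_0, k_0)$, say with $A_{j_0, k_0}$ an equality and $B_{j_0, k_0}$ strict at $V_F$, I would build an edge by perturbation: decrease $x^{(j_0)}_{k_0}$ by a small $\epsilon > 0$, and simultaneously decrease every coordinate whose chosen equality-chain at $V_F$ descends to $(j_0, k_0)$, leaving all other coordinates fixed. Proposition \ref{gtpolytope} realizes any point of $\cc{P}$ as $\Lambda(A)$ for some $A \in \cc{O}_\lambda$, so it suffices to check that the perturbed configuration lies in $\cc{P}$. The chain equalities among the perturbed coordinates remain equalities since both sides shift by $-\epsilon$, and every non-chain inequality at a non-pinch was strict at $V_F$, so persists for small $\epsilon$. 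The crucial ingredient is a \emph{no-pinch-in-chain} observation: by the characterization above, if either $(j_0 - 1, k_0)$ or $(j_0 - 1, k_0 - 1)$ were a pinch, then $(j_0, k_0)$ would itself be a pinch, contradicting the hypothesis; inducting on the row index shows that every descendant of a non-pinch in the chain forest is also a non-pinch, and so the cascade never collides with a forced-pinch constraint. At the perturbed point, $(j_0, k_0)$ is the unique pair with both inequalities strict, so by Lemma \ref{edge} we obtain an edge, and distinct non-pinches produce edges with distinct sets of tight inequalities.

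It remains to count: summing by row, the multiplicity-$(s+1)$ eigenvalue forces $s$ pinches in row $n$, $s-1$ in row $n-1$, down to $1$ in row $n-s+1$, for a total of $\binom{s+1}{2}$ pinches. Hence the number of edges emanating from $V_F$ is
\[
N - \binom{s+1}{2} = \binom{n}{2} - \binom{s+1}{2} = \frac{n(n-1) - s(s+1)}{2} = D,
\]
the last equality coming from $D = \sum_{i<j} l_i l_j$ applied to the multiplicity pattern $(1, \ldots, 1, s+1, 1, \ldots, 1)$. The hard part will be the cascade verification in the non-pinch case; this reduces entirely to the no-pinch-in-chain lemma indicated above, together with the observation that for $\epsilon$ small enough all strict inequalities at $V_F$ remain strict.
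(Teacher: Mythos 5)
Your route is genuinely different from the paper's. The paper's proof is two lines: since the good vertex has $\Lambda$-preimage $F$ lying in $U$ (together with the preimages of the relative interiors of all faces through $V_F$), there is a smooth, effective, Hamiltonian $T^D$-action near the fixed point $F$, and the local normal form theorem forces the local moment image to be a $D$-dimensional orthant, hence exactly $D$ edges. You instead argue purely combinatorially with the defining inequalities of $\cc{P}$. The pinch/non-pinch dichotomy, its identification at the good vertex with the orbit-wide forced equalities (which is precisely the defining property of $F$ from Lemma \ref{fixednotreg}), the no-pinch-in-chain observation, the cascade, and the count $N-\binom{s+1}{2}=D$ are all correct. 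What your approach buys is independence from the symplectic input (smoothness of the Gelfand--Tsetlin action at $F$ and the normal form theorem), at the cost of more bookkeeping.

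There is, however, a gap: your construction yields $D$ pairwise distinct edges, i.e.\ a lower bound, but the statement is an exact count. Your opening sentence asserts that every edge at $V_F$ is ``determined by'' a pair with both inequalities strict and all other pairs tight, but Lemma \ref{edge} gives only one implication (a point of that form lies in the interior of an edge); it does not rule out an edge whose interior points have two or more pairs with both inequalities strict. You need the converse. It can be supplied in the same spirit: take $X$ in the relative interior of an edge at $V_F$, close enough to $V_F$ that every inequality strict at $V_F$ is still strict at $X$, and let $S$ be the set of non-pinch pairs whose $V_F$-equality has become strict at $X$. If $S=\emptyset$, the argument of Lemma \ref{vertex} forces $X=V_F$, a contradiction. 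If $|S|\geq 2$, the equalities tight at $X$ (the pinch equalities plus one chosen equality for each pair not in $S$) form a forest with $|S|$ free roots, so they cut out an affine subspace of dimension $|S|$; since every other defining inequality of $\cc{P}$ is strict at $X$, the smallest face of $\cc{P}$ containing $X$ has dimension $|S|\geq 2$, contradicting that $X$ lies in the relative interior of an edge. Hence $|S|=1$ and every edge is one of yours. Note that the exactness is genuinely used later: Proposition \ref{edgelengths} invokes this proposition to conclude that the $D$ edges found there ``must be all the edges.''
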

\begin{proof}
All the $\Lambda$ preimages of interiors of faces containing $\Lambda(F)$, are also in $U$.
Thus around $F$ we have a smooth, effective, Hamiltonian action of $T^D$ on $U$.
The local normal form theorem, (see for example \cite{KT2}), gives that, in a suitably chosen basis,the image of moment map is a $D$ dimensional orthant.
In particular this proves that there are exactly $D$ edges starting from this point.
\end{proof}
Note that there may be more then $D$ edges starting from vertices of $\cc{P}$ that are not good vertices.
\section{Proof of the main theorem}\label{mainproof}
\begin{proof} Recall that the main theorem states that the Gromov width of the coadjoint $U(n)$-orbit, $\mathcal{O}_{\lambda}$,
through $\lambda$ of the form
$$\lambda_1 > \lambda_2> \ldots > \lambda_{l_1}=\lambda_{l_1+1}= \ldots = \lambda_{l_1+s}>\lambda_{l_1+s+1}> \ldots > \lambda_n, \,\,s \geq 0,$$
is at least $\min\{\lambda_i-\lambda_j\,|\,\lambda_i > \lambda_j\}$. Let $$\{v_1>v_2>\ldots >v_{n-s}\}=
\{\lambda_1 >\lambda_2> \ldots > \lambda_{l_1}>\lambda_{l_1+s+1}> \ldots >\lambda_n\}$$
be the set of distinct eigenvalues.
Our main theorem states that in this case for any $r< \min\{v_i-v_{i+1}\}$
we can symplectically embed a ball $B^{2D}_r$ of capaciy $r$ into $\mathcal{O}_{\lambda}$.\\
\indent Let $\Lambda(F)=V_F$ be a good vertex of $\cc{P}$.
Let $\cc{T}$ be an open subset of $\lie{t}^*$ such that 
$$\Lambda(\mathcal{O}_{\lambda})\cap\cc{T}=\bigcup_{\substack{\cc{F} \text{ face of } \cc{P} \\ V_F \in \cc{F}}}(\text{rel-int } \cc{F})$$
and let
$\cc{W}=\Phi\inv(\cc{T})$.
This is the largest subset of $M$ centered around a point $F=\Lambda^{-1}(V_F)$ (compare with Example \ref{largest}).
Then $\cc{W}$ is centered around this vertex, according to Definition \ref{centered-definition}.
Proposition \ref{embed} gives us an symplectic embedding
$$\Psi:\left\{ z \in \bb{C}^D \ | \ V_F + \pi \sum |z_j|^2 \eta_j \in \cc{T} \right\}\rightarrow \cc{O}_{\lambda},$$
where $\eta_1, \ldots, \eta_D$ are the isotropy weights of $T^D$ action on $T_{F}\cc{O}_{\lambda}$.
These $D$ weights span $D$ edges of $\cc{P}$ starting from $V_F$. For the edge in the direction of $\eta_l$, there is a number $c_l \in \bb{R}$
such that the edge is precisely $c_l\,\eta_l$. Let 
$$r=\max\{s\,|\,s \leq c_l,\,\textrm{ for all }l=1,\ldots, D\}.$$
The ball of capacity $r$, $B_r=\{z \in \bb{C}^D\,| \pi \sum |z_l|^2\eta_l <r \}$,
is contained in the domain of $\Psi$. Therefore the restriction of $\Psi$ gives us a symplectic embedding of a ball of capacity $r$.\\
\indent We prove the main theorem by showing that for any edge, $c_l$ is at least 
the minimum $\min\{v_i-v_j\,|\,v_i>v_j\}=\min\{\lambda_i-\lambda_j\,|\,
\lambda_i>\lambda_j\}.$
Moreover, we will show that
any good vertex 
 has an edge with the length equal to the minimum of $v_i-v_j$ times the length of $\eta_l$ spanning this edge.
This means that the lower bound we prove is the best possible we can get from this almost toric action.
Let us emphasize that there might exist symplectic embeddings of bigger balls, however this method fails to find them.
\begin{proposition}\label{edgelengths}
The length of any edge in $\cc{P}$ starting from $V_F$ is at least $\min\{v_i-v_j\,|\,v_i>v_j\}$ times the length of the weight spanning this edge.
Moreover, there is an edge with length exactly the $\min\{v_i-v_j\,|\,v_i>v_j\}$ times the length of the weight spanning it.
\end{proposition}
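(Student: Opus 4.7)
The plan is to parametrize each of the $D$ edges emerging from $V_F$, compute each scalar $c_l$, and show $c_l\geq\min\{v_i-v_j\mid v_i>v_j\}$ with equality attained by at least one $l$.

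By Lemma \ref{edge}, each edge of $\cc{P}$ from $V_F$ is characterized by a single pair $(j_0,k_0)$ at which both interlacing inequalities $A_{j_0,k_0}$ and $B_{j_0,k_0}$ become strict along the edge. The edge is parametrized by the value of $x^{(j_0)}_{k_0}$, which moves monotonically from its $V_F$-value toward the opposite interlacing bound; interlacing with the rows below forces a chain of further Gelfand-Tsetlin coordinates (those in the same column as $x^{(j_0)}_{k_0}$ which at $V_F$ were equal to it) to move in lockstep by the same parameter $t$, while all coordinates outside the chain remain fixed. The resulting primitive direction of motion is precisely the isotropy weight $\eta_l$ produced by Proposition \ref{Dedges}.

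To bound $c_l=t_{\max}$, observe that the motion halts at the first interlacing inequality (\ref{ineq}) that becomes an equality. Because every Gelfand-Tsetlin value at the good vertex $V_F$ lies in the set $\{v_1,\ldots,v_{n-s}\}$ of distinct eigenvalues, and the coordinates outside the chain are held fixed along the edge, each candidate bound is of the form $v_p-v_q$ for distinct eigenvalues $v_p>v_q$ appearing in the GT pattern of $F$. Hence $c_l\geq\min\{v_i-v_j\mid v_i>v_j\}$. For the equality statement, let $v_p>v_q$ realize the minimum; these two consecutive distinct eigenvalues appear at adjacent columns in some row of the GT pattern of $F$ (for instance in row $n$, the sorted list of eigenvalues of $F$), and selecting $(j_0,k_0)$ to be the position immediately below this adjacency produces an edge whose tightest active constraint is exactly $v_p-v_q$, giving $c_l=v_p-v_q$.

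The main obstacle is the combinatorial bookkeeping, especially in the non-generic case where the GT pattern of $F$ has repeated entries coming from the block $\lambda_{l_1}=\ldots=\lambda_{l_1+s}$. In that regime some candidate pairs $(j_0,k_0)$ yield no edge (when both $A_{j_0,k_0}$ and $B_{j_0,k_0}$ already hold at $V_F$ because $\lambda^{(j_0+1)}_{k_0}(F)=\lambda^{(j_0+1)}_{k_0+1}(F)$), some chains are shortened by the constant Gelfand-Tsetlin functions, and one must verify that exactly $D$ edges survive to match Proposition \ref{Dedges}, with chains terminating only at genuine differences of distinct eigenvalues.
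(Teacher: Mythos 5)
Your route is genuinely different from the paper's. The paper never works inside $\cc{P}$ directly: for each pair $p<q$ with $F_{pp}\neq F_{qq}$ it takes the explicit sphere $\{F_z\}$ from the GKM $1$-skeleton of the standard $T^n$-action, shows (Lemma \ref{identifyingedges} and the lemma following it) that $\Lambda(F_z)$ sweeps out at least the initial portion of an edge of $\cc{P}$ emanating from $V_F$, and then measures that edge by pushing it forward under the projection $pr$ to the standard moment polytope, where $pr(\eta_l)$ is the known root $-e_{pp}+e_{qq}$ and the displacement $\overline{\mu(F)\,\mu(\widetilde{Z})}$ is computed to be $(v_i-v_{i+1})(-e_{pp}+e_{qq})$; this gives $c_l\geq v_i-v_{i+1}$, and equality when $v_i,v_{i+1}$ are the adjacent entries $F_{pp},F_{qq}$, in which case $\widetilde{Z}=F'$ is a fixed point. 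Your argument is intrinsic to the interlacing inequalities, which is appealing, and for the first half of the proposition it would even let you avoid enumerating the $D$ edges, since it applies to an arbitrary edge through $V_F$.

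There are, however, two genuine gaps. The decisive one is the sentence ``the resulting primitive direction of motion is precisely the isotropy weight $\eta_l$.'' The proposition compares the edge length to the length of $\eta_l$, i.e.\ you must show $c_l=t_{\max}$ where $e=c_l\eta_l$; for that you need the vector with entries $\pm1$ on the lockstep chain and $0$ elsewhere to be the actual isotropy weight of the $T^D$-action at $F$, not a nonzero multiple of it. Nothing in Lemmas \ref{vertex}--\ref{edge} or Proposition \ref{Dedges} supplies this normalization --- it amounts to knowing that the nonconstant Gelfand--Tsetlin functions are period-one action variables near $F$ --- and the paper sidesteps it precisely by projecting to $\cc{Q}$, where the $T^n$-weights at a fixed point of a coadjoint orbit are known roots. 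Secondly, you invoke Lemma \ref{edge} as a characterization of edges, but it only gives a sufficient condition for a point to lie in the interior of an edge; the converse you need --- that the relative interior of every edge through $V_F$ has exactly one doubly strict position and is traced by a single free chain moving in lockstep --- is true but requires an argument (for instance, that a free component containing two entries of one row is forced by interlacing to propagate up to the pinned row $n$). The bookkeeping you defer in your last paragraph matters only for the ``moreover'' clause; there your candidate edge below a $v_m/v_{m+1}$ adjacency in row $n$ does work, since the row-$n$ bound caps $t_{\max}$ by $v_m-v_{m+1}$ while part one bounds it below, but you still owe the check that this position is not pinned, i.e.\ that the candidate is a genuine edge.
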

\begin{proof}
 Recall from Section \ref{action} that the moment maps for the standard and the Gelfand-Tsetlin torus actions are related through projection $pr$:
$\mu=pr \circ \Lambda$.
We will show that 
for any edge $e \in \cc{P}$ starting from $V_F$ there is an edge $e'$ in $\cc{Q}_1$ (possibly not and edge but just a line segment in $\cc{Q}$) 
such that $pr (e)\subset e'$.
This will help us to analyze edges of $\cc{P}$.\\
\indent Denote the diagonal entries of $F$ by $F_{11}, \ldots ,F_{nn}$.
Let $p < q$ be indices from $\{1, \ldots ,n \}$ such that $F_{pp} \neq F_{qq}$
and $F'$ is the matrix obtained from $F$ by switching $p$-th and $q$-th entry.
The edge joining $\mu(F)$ and $\mu(F')$ is an $\mu$-image of a sphere $S:=\{F_z;\, z \in \bb{C} \cup \{ \infty \}\,\}$ in $\cc{O}_{\lambda}$ defined in the Section \ref{standard action}.
We will analyze $\Lambda(S)$.\\
Assume that $v_k < v_i$. The other case is proved in a similar way.
First observe that for $j<p$ the matrices $(F_z)_j:=\Phi^{j}(F_z)$ and $F_j:=\Phi^{j}(F)$ are both equal to $\diag\,(F_{1,1}, \ldots, F_{j,j})$.
Also for $j \geq q$ the matrices $(F_z)_j$ and $F_j$ have the same eigenvalues.
This is because
the eigenvalues of this $2 \times 2$ matrix
\begin{displaymath}\left[
\begin{array}{cc}
\frac{(v_i +|z|^2 v_k)}{Z} & \frac{\bar{z}(v_i - v_k)}{Z}\\
\frac{z(v_i - v_k)}{Z} & \frac{(v_k +|z|^2 v_i)}{Z}
\end{array} \right],
\end{displaymath}
where $Z=\sqrt{1+|z|^2}$, are $v_i$ and $v_k$.
Therefore, for $j <p$ or $j \geq q$, we have 
\begin{equation}\label{FaequalF}
\forall_{F_z \in S}\,\,\, \lambda^{(j)}_m(F_z)=\lambda^{(j)}_m(F),
\end{equation}
for any $m=1, \ldots, n-j$.
Denote by
$\rho(|z|)=\frac{(v_i +|z|^2 v_{k})}{Z}$. While $a$ goes to $\infty$, $\rho$ decreases its value from $v_i$ to $v_{k}$.
\begin{lemma}\label{identifyingedges}
 For $z$ such that $v_{i}>\frac{(v_i +|z|^2 v_{k})}{Z}=\rho(|z|)>v_{i+1}$ 
the point $\Lambda(F_z)$ is in the interior of an edge of $\cc{P}$.
\end{lemma}
\begin{proof}
Let $m$ be such that $$ \lambda^{(q-1)}_m(F_z)=v_i >\rho(|z|)= \lambda^{(q-1)}_{m+1}(F_z).$$
We will show that for any
$(j,l)\neq(q-1,m)$, $j=1,\ldots,n-1$, $l=1, \ldots, j$, we have that
$$ \lambda^{(j)}_{l}(F_z)=\lambda^{(j+1)}_{l}(F_z)\textrm{   or   }\lambda^{(j)}_{l}(F_z)=\lambda^{(j+1)}_{l+1}(F_z),$$
and use the Lemma \ref{edge}.
The matrix $(F_z)_q:=\Phi^{q}(F_z)$ is diagonal, thus, repeating the proof of Lemma \ref{fixedptvertex} for $(F_z)_q$,
we can show that the above claim
holds for $j<q-1$ and any $l$.
Also, for $j \geq q$ the claim holds, due to equations (\ref{FaequalF}) and Lemma \ref{fixedptvertex}.
Thus, for $j \neq q-1$ and any $l$, the function 
$\lambda^{(j)}_{l}$ is equal at $F_z$ to its lower or upper bound.
The only hard case is when $j=q-1$.
Notice that 
$$spectrum((F_z)_{q})=spectrum ((F_z)_{q-1}) \cup \{v_i, v_{k}\} \setminus \{\rho(|z|)\} .$$
The Figure \ref{figedgelemma}  presents sequences of ordered eigenvalues of $(F_z)_{q-1}$ and $(F_z)_q$.
\begin{figure}[h]
\label{figedgelemma}
	\centering
		\includegraphics[width=1.0\textwidth]{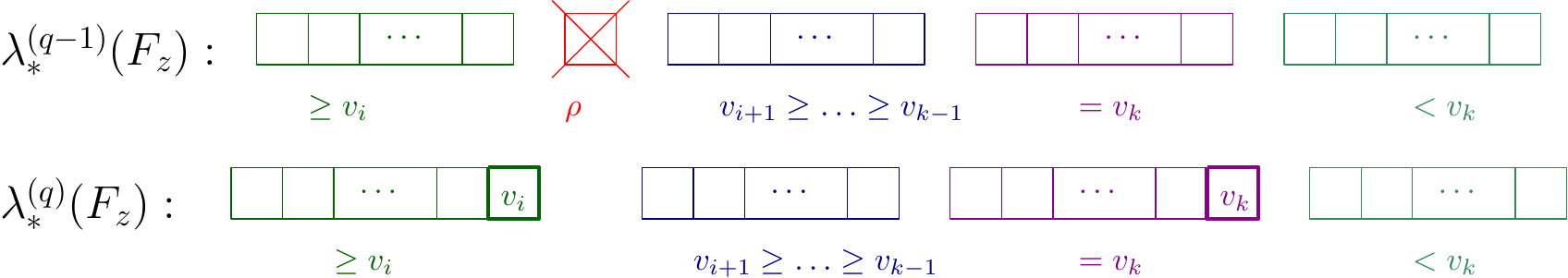}
	\caption{Eigenvalues of $(F_z)_{q-1}$ and $(F_z)_q$.}
\end{figure}
This presentation helps to note that
$$\forall_{t\neq m,\,}\,\lambda^{(q-1)}_{t}(F_z)\geq v_k \,\,\Rightarrow \lambda^{(q-1)}_{t}(F_z)=\lambda^{(q)}_{t}(F_z),$$
$$\forall_{t\neq m,\,}\,\lambda^{(q-1)}_{t}(F_z)< v_k \,\,\Rightarrow \lambda^{(q-1)}_{t}(F_z)=\lambda^{(q)}_{t+1}(F_z).$$
All eigenvalues of $(F_z)_q$ are equal to some element of the set $\{v_1, \ldots, v_{n-s}\}$. Therefore 
$\lambda^{(q-1)}_m(F_z)=\rho(|z|) \in (v_{i+1},v_i)$ is not equal to $\lambda^{(q)}_m(F_z)$ nor  $\lambda^{(q)}_{m+1}(F_z)$.
\end{proof}
\begin{lemma} If $v_{i+1}\in \{\lambda^{(q-1)}_{1}(F_z),\ldots, \lambda^{(q-1)}_{q}(F_z)\}$, then
$\Lambda(\,\{F_z\,|\,\rho(|z|)=v_{i+1}\}\,)$ is a vertex of $\cc{P}$. 
\end{lemma}
 \noindent In particular, if $k=i+1$ then $\Lambda(\,\{F_z\,|\,\rho(|z|)=v_{k}\}\,)$ is a vertex of $\cc{P}$.
\begin{proof}
Similarly to the proof of Lemma \ref{identifyingedges}, we show that for
$(j,l)\neq(q-1,m)$, $j=1,\ldots,n-1$, $l=1, \ldots, j$,
the function $ \lambda^{(j)}_{l}$ at $F_z$ is equal to its lower or upper bound (again use Figure \ref{figedgelemma}).
However this time $\lambda^{(q-1)}_{m}(F_z)=\rho(|z|)=v_{i+1}=\lambda^{(q)}_{m+1}(F_z)$.
We use Lemma \ref{vertex} to deduce that $\Lambda(\,\{F_z\,|\,\rho(|z|)=v_{i+1}\}\,)$ is a vertex of $\cc{P}$. 
\end{proof}
In this way we found an edge, or a subset of an edge, of $\cc{P}$ starting from $V_F$. 
Now we need to compute it's length relative to the length of the isotropy weight spanning this edge.
Notice that the projection $pr$ maps the weights of $T^D$ action to the weights of $T^n$ action.
If $e=c_l \eta_l$ is the edge of $\cc{P}$, then $pr(e)=c_l pr(\eta_l)$ is the part of the corresponding edge $e'$ of $\cc{Q}_1$
from $\mu(F)$.
The weight $pr(\eta_l)$ is the negative root $-e_{pp}+e_{qq}$.
We will denote $\widetilde{Z}:=\{F_z\,|\,\rho(|z|)=v_{i+1}\}$ and $\widetilde{V}:=\Lambda(\widetilde{Z})$, 
regardless of the fact if it is a vertex or an interior point of and edge in $\cc{P}$.
Notice that $\widetilde{V}$, has values of $\Lambda$ 
that are different from those of $F$
in exactly $(q-p)$ places. Precisely, for every $p \leq j <q$, there is exactly one $s$ such that 
$ \lambda^{(j)}_s(F)=v_i$ while $ \lambda^{(j)}_s(\widetilde{Z})=v_{i+1}$.
Recall from section \ref{action} that the $k-th$ coordinate of $pr(\{\lambda^{(*)}_*\})$ is given by
$$ (\,pr(\{\lambda^{(*)}_*\})\,)_k=\sum_{s=1}^k\lambda^{(k)}_s-\sum_{s=1}^{k-1}\lambda^{(k-1)}_s $$
for $k>1$ and is equal to $\lambda^{(1)}_1$ for $k=1$.
Therefore $\mu(F)=pr(\Lambda(F))$ and $\mu(\widetilde{Z})=pr(\Lambda(\widetilde{Z}))$ differ only at $p$-th and $q$-th coordinates:
\begin{align*}
(\,pr(\Lambda(F))\,)_p&=\sum_{s=1}^p\lambda^{(p)}_s(F)-\sum_{s=1}^{p-1}\lambda^{(p-1)}_s(F)\\
&=\sum_{s=1}^p\lambda^{(p)}_s(\widetilde{Z})+v_i-v_{i+1}-\sum_{s=1}^{p-1}\lambda^{(p-1)}_s(\widetilde{Z})=
(\,pr(\Lambda(\widetilde{Z}))\,)_p+v_i-v_{i+1}\\
\end{align*}
\begin{align*}
(\,pr(\Lambda(F))\,)_q&=\sum_{s=1}^q\lambda^{(q)}_s(F)-\sum_{s=1}^{q-1}\lambda^{(q-1)}_s(F)\\
&=\sum_{s=1}^q\lambda^{(q)}_s(\widetilde{Z})+v_i-v_{i+1}-(\,\sum_{s=1}^{q-1}\lambda^{(q-1)}_s(\widetilde{Z})+v_i-v_{i+1}\,)\\
&=
(\,pr(\Lambda(\widetilde{Z}))\,)_q-(v_i-v_{i+1})
\end{align*}
Thus
$$\overline{\mu(F)\,\mu(\widetilde{Z})}=(v_i-v_{i+1})(-e_{pp}+e_{qq}),$$
and the edge $e$ of $\cc{P}$ is at least $(v_i-v_{i+1})$ multiple of the weight spanning it.\\
\indent In case where $v_k >v_i$ we would proof in an analogous way that the edge joining $F$ and $F'$ has the lattice length (w.r.t. weight lattice) at least $(v_{i-1}-v_i)$, 
as $\rho(|z|)$ would be increasing its value from $v_i$ to $v_k$.\\
\indent Notice that different pairs of $p$ and $q$ (such that $F_{pp} \neq F_{qq}$) give different edges.
This follows, for example, from the fact that
for $j<p$ or $j \geq q$, we have 
$ \lambda^{(j)}_s(F_z)=\lambda^{(j)}_s(F).$
Therefore we found $D$ edges starting from $V_F$. The Proposition \ref{Dedges} gives that these must be all the edges.\\
\indent
Now suppose that $m$ is the index such that the minimum of $\{v_i - v_{i+1}\,|\, i=1, \ldots, s\}$ is equal to $v_m-v_{m+1}$.
There are indices $p<q$ such that $F_{p,p}=v_m$ and $F_{q,q}=v_{m+1}$, or $F_{p,p}=v_{m+1}$ and $F_{q,q}=v_m$.
Let $F'$ be the diagonal matrix obtained from $F$ by switching $p$-th and $q$-th entry.
Then $\widetilde{Z}=F'$, $\widetilde{V}=\Lambda(F')$ and the edge of $\cc{P}$ between these two vertices is 
exactly $(v_m-v_{m+1})$ multiple of the weight spanning it.
\end{proof}
Proposition \ref{edgelengths} together with Proposition \ref{embed} give the proof of the Main Theorem, as explained in the beginning of this Section.
\end{proof}
\section{Low-dimensional examples.}\label{summary}
In this section we summarize what is known about Gromov width of $U(n)$ coadjoint orbits.
The table below presents low dimensional examples for which it was proved that lower bound of Gromov width is as expected: the minimum of $\lambda_j-\lambda_j$ over $\lambda_i>\lambda_j$. The table also specifies 
if this fact follows directly from our Main Theorem; if it requires Remark \ref{moresmoothness}; or if it was proved using different methods. 
Generic $U(1)$ orbits, and degenerate $U(2)$ orbits are just points, so their Gromov width is $0$. Gromov width of generic orbits satisfying some integrality conditions was
already calculated by Zoghi in \cite{Z}.\\
\begin{tabular}{c| p{5cm}||c|c|p{4.4cm}}
\hline
n & $\lambda$ & Thm \ref{main}&  Rem. \ref{moresmoothness}& Other \\ \hline 
\hline
2&  generic $\leadsto$ sphere& $\sqrt{}$ &&Delzant Thm; also \cite{Z}\\
& degenerate $\leadsto$points&&&\\ \hline
3& any $\lambda$& $\sqrt{}$ && generic - proved in \cite{Z}\\ \hline
4& $(\lambda_1,\lambda_1,\lambda_2,\lambda_2)$ $\leadsto$ complex Grassmannian of $2$-planes in $\bb{C}^4$&$-$& $\sqrt{}$& Karshon and Tolman, \cite[Theorem 1]{KT}\\ \hline
4&other $\lambda$ & $\sqrt{}$ && generic - proved in \cite{Z}\\ \hline
5& $\begin{cases}
     (\lambda_1, \lambda_1, \lambda_2, \lambda_2, \lambda_3)\\
(\lambda_1, \lambda_1, \lambda_2, \lambda_2, \lambda_2)\\
(\lambda_1,\lambda_1, \lambda_1, \lambda_2, \lambda_2)\end{cases}$ &$-$& $\sqrt{}$& \\  \hline
5& other $\lambda$ & $\sqrt{}$ &&generic - proved in \cite{Z}\\ 
\hline
6&$(\lambda_1, \lambda_1, \lambda_2, \lambda_2, \lambda_3, \lambda_3)$&$-$&$-$&$-$\\ \hline
\end{tabular}\\

In the case of $n=6$, there is already an orbit for which we still don't have even the lower bound of the Gromov width.
Namely $(\lambda_1, \lambda_1, \lambda_2, \lambda_2, \lambda_3, \lambda_3)$. For all the other orbits, the lower bound or even exact Gromov width is proved in 
Theorem \ref{main} together with Remark \ref{moresmoothness}, or in \cite{KT}, or \cite{Z}.\\

\end{document}